\newtheorem{thm}{Theorem}[section]
\newtheorem{lem}[thm]{Lemma}
\newtheorem{prop}[thm]{Proposition}
\newtheorem{cor}[thm]{Corollary}
\theoremstyle{remark}
\newtheorem*{rmk}{Remark}
\newcommand{\be}{\begin{eqnarray}}
\newcommand{\ee}{\end{eqnarray}}
\newcommand{\beal}{\begin{aligned}}
\newcommand{\eeeal}{\end{aligned}}
\newcommand{\beit}{\begin{itemize}}
\newcommand{\eeit}{\end{itemize}}
\newcommand{\blm}{\begin{lem}}
\newcommand{\elm}{\end{lem}}
\newcommand{\T}{\mathbb{T}}
\newcommand{\R}{\mathbb{R}}
\newcommand{\C}{\mathbb{C}}
\newcommand{\N}{\mathbb{N}}
\newcommand{\Z}{\mathbb{Z}}
\newcommand{\cB}{\mathcal{B}}
\newcommand{\cD}{\mathcal{D}}
\newcommand{\cA}{\mathcal{A}}
\newcommand{\bP}{\mathbf{P}}
\newcommand{\bQ}{\mathbf{Q}}
\renewcommand{\Im}{\mathrm{Im}\,}
\renewcommand{\Re}{\mathrm{Re}\,}
\newcommand{\id}{\mathrm{id}}
\newcommand{\tdr}{\tilde{r}}
\newcommand{\olu}{\overline{u}}
\newcommand{\ulu}{\underline{u}}
\newcommand{\nek}{{\mathrm{nek}}}
\newcommand{\laz}{\mathrm{laz}}
\newcommand{\app}{\mathrm{app}}
\newcommand{\cau}{\mathrm{cau}}
\newcommand{\bmat}[1]{\begin{bmatrix} #1 \end{bmatrix}}
\subjclass[2010]{Primary 37J40, Secondary 37J50, 70H08}
\keywords{billiard map, rational caustic, KAM theorem, Nekhoroshev theorem}
\title{Density of convex billiards with rational caustics}
\author{Vadim Kaloshin$^1$}
\address{$^1$ Department of Mathematics, University of Maryland}
\author{Ke Zhang$^2$}
\address{$^2$ Department of Mathematics, University of Toronto}
\begin{document}
\maketitle
\begin{abstract}
	We show that in the space of all convex billiard boundaries, the set of boundaries with rational caustics is dense. More precisely, the set of billiard boundaries with caustics of rotation number $1/q$ is polynomially sense in the smooth case, and exponentially dense in the analytic case. 
\end{abstract}

\section{Introduction}

Let $\Omega\subset \R^2$ be a strictly convex billiard table, assume that its boundary is given by 
\[
	\partial \Omega = 
	\left\{ r(s) \in \R^2 \right\}, \quad r(s) = r(s + 1),\,  s \in \R.
\]
By applying a translation, we can assume that $0 \in \Omega$. Let $\kappa(s)$ denote the curvature $\partial\Omega$ at $r(s)$, we assume for $D > 1$:
\begin{equation}
  \label{eq:real-assumption}
  	|\dot{r}(s)| \ge D^{-1}, \quad \kappa(s) \ge D^{-1}. \tag{A1} 
\end{equation}

We are interested in the existence of rational caustics of rotation number $1/q$. Baryshnikov and Zharnitsky (\cite{BZ2006}) showed that boundaries admitting caustics of a fixed rational rotation number is a finite co-dimension sub-manifold among all boundaries. Unlike irrational caustics which tend to be robust under perturbation (due to KAM theorem, see \cite{Laz1973}), rational caustics tend to break up under perturbation. In this paper, we investigate how abundant billiards with rational caustics are,  within the space of all boundaries. Analog to Hamiltonian averaging suggests the density of such billiards should be polynomial in $q$ in the smooth case, and exponential in $q$ in the analytic case. Indeed, Mart\'{i}n, Ram\'{i}rez-Ros and Tamarit-Sariol showed (\cite{MRT2016}), in the analytic case, Mather's $\Delta W_{p/q}$ function is exponentially small. This means that the billiard is ``exponentially close'' to having a caustic. This, however, does not mean one can perturb the boundary by an exponentially small amount to create a caustic. The reason is the billiard dynamics depends rather implicitly on the boundary, and it is not obvious how to perturb the boundary to obtain the desired caustics. 

Our main result is that this indeed can be done. 
\begin{thm}\label{thm:main}
\begin{enumerate}
 \item  Suppose $r$ is a real analytic boundary, extensible to a complex neighborhood of size $\sigma_0$.  Then there is $C>1$ depending only on $\sigma_0, D$ and the analytic norm of $r$, such that for $q$ sufficiently large, there a real analytic boundary $r_\cau$ admitting a caustic of rotation number $1/q$, satisfying 
 \[
 	\|r - r_\cau\|_{\sigma_0/8} < C e^{- C^{-1} q},
 \] 
 there $\|\cdot\|_{\sigma}$ is the supremum norm of the complex extension. 
 \item Suppose $r$ is $C^m$ with $m > 4$,  then for each $3< l < m$,  there is $C > 1$ depending only on $D$, $\|r\|_{C^m}$, $l$, $m$, and a $C^\infty$ boundary $r_\cau$ admitting a caustic of rotation number $1/q$, such that 
 \[
 	\|r - r_\cau\|_{C^l} < C q^{\frac{m-l}{7}}. 
 \]
\end{enumerate}
\end{thm}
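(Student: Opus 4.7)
The plan is to construct $r_\cau$ by a KAM-style Newton iteration starting from the given $r$. The first step is to encode ``admitting a caustic of rotation number $1/q$'' as the vanishing of an obstruction $\cF_q(r)$, naturally taken as an analogue of Mather's $\Delta W_{1/q}$: the difference between the actions of the Birkhoff minimal and maximal $q$-periodic configurations, viewed as a function on the boundary parameter $s$. In the analytic case, \cite{MRT2016} gives $\|\cF_q(r)\| \le Ce^{-q/C}$ directly. In the $C^m$ case, the polynomial analogue $\|\cF_q(r)\| \le Cq^{-(m-2)}$ should follow from the decay of the Fourier modes of the curvature together with a Marvizi--Melrose-type asymptotic expansion in the near-tangent regime.

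Next, linearize $\cF_q$ with respect to normal variations $\delta r = \eps(s)\nu(s)$ of the boundary. The main technical step is to construct a right inverse $L_q^{-1}$ of $D_r\cF_q$ whose norm and loss of derivatives are controlled polynomially in $q$. The twist condition guaranteed by \eqref{eq:real-assumption} ensures that the Birkhoff $1/q$-periodic family depends smoothly on $r$, and the envelope theorem applied to the $q$-gonal perimeter expresses the first variation of $\cF_q$ as an explicit integral operator in $\eps$ whose kernel involves only the geometry of a single fundamental domain of the $q$-gon. The main obstacle is inverting this operator uniformly in $q$: because $T^q$ is close to the identity on the $1/q$-invariant region, $D_r\cF_q$ behaves like a small-denominator operator, and one must work modulo its natural finite-dimensional cokernel, exploiting the Birkhoff normal form around the $1/q$-resonance to extract a controlled pseudoinverse.

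Given the approximate inverse, iterate $r_{n+1} = r_n - L_q^{-1}\cF_q(r_n)$. In the analytic case, each step is combined with an analytic smoothing onto a strip of width $\sg_n$ decreasing geometrically to $\sg_0/8$; the quadratic gain $\cF_q(r_{n+1}) = O(\cF_q(r_n)^2)$ overcomes the polynomial $q$-loss in $L_q^{-1}$, and the iteration converges with $\|r - r_\cau\|_{\sg_0/8} \le Ce^{-q/C}$. In the $C^m$ setting, Newton is replaced by a Nash--Moser scheme using smoothing operators at Hadamard scales $q^{\bt_n}$; balancing the initial polynomial smallness of $\cF_q$ against the loss from $L_q^{-1}$ and the Nash--Moser telescoping yields the exponent $(m-l)/7$, whose denominator $7$ reflects the three compounded sources of regularity loss (the initial bound on $\cF_q$, the operator inversion, and the interpolation inequalities). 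The hardest piece by far is producing the $q$-uniform pseudoinverse in the second paragraph; once that is in hand, both convergence statements reduce to bookkeeping of standard KAM/Nash--Moser type.
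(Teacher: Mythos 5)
There is a genuine gap, and it sits exactly where you point: the ``$q$-uniform pseudoinverse'' of $D_r\cF_q$ is not constructed, and the obstruction functional you chose makes it unlikely that it can be. A Mather-type $\Delta W_{1/q}$ (or Peierls-barrier) functional is built from minimal and minimax actions; precisely in the regime you care about --- where the billiard is exponentially close to having a caustic, so the minimizing and minimaxing $q$-periodic configurations are about to merge --- these extremal values are not smoothly differentiable in $r$ (the envelope-theorem formula degenerates when extremizers are non-unique), so the Newton linearization underlying your whole scheme is not available with tame estimates. Moreover, even granting a linearization, you propose to invert it only ``modulo its natural finite-dimensional cokernel,'' but you never say how the cokernel component of the error is removed by a further boundary deformation; without that, the Newton step does not gain quadratically and the iteration does not close. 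This is not bookkeeping: it is the central difficulty of the problem, since the billiard dynamics depends implicitly on the boundary and one must exhibit an explicit boundary perturbation that kills the averaged (resonant) obstruction.

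The paper resolves this with a different and quite specific mechanism that your proposal does not contain. It works with the smooth functional $E(r,u)$ of the \emph{pair} (boundary, circle parametrization) rather than with an extremal-action functional of $r$ alone, and it splits the error into resonant and non-resonant parts with respect to the $1/q$-averaging $[\cdot]_q$. The resonant part (your ``cokernel'') is eliminated exactly, not approximately, by the radial rescaling $r\mapsto e^{a}r$ with $a$ constant along the approximate orbit: Lemma~\ref{lem:prjection} and the identity $[F(r,u)]_q=[|\Delta r|]_q$, $\tfrac{d}{d\theta}[F(r,u)]_q=[u_\theta E(r,u)]_q$ of Lemma~\ref{lem:F-const} reduce this to solving the explicit scalar ODE \eqref{eq:a} (Corollary~\ref{cor:proj}). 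The non-resonant part is handled through the Moser--Levi identity \eqref{eq:moser-levi}, whose right-hand side $\nabla^-\bigl(L_{12}u_\theta u_\theta^+\nabla w\bigr)$ is inverted explicitly with only a $q^3$ loss (Corollary~\ref{cor:inverse-bound}); this, preceded by a Lazutkin normal form and a Nekhoroshev-type averaging (Proposition~\ref{prop:nek}) to make the starting error exponentially small, feeds a convergent KAM scheme (Theorem~\ref{thm:analytic-KAM}). In the smooth case the paper also does the analytic smoothing at the level of the curvature while restoring the closing conditions $\int\kappa\cos(2\pi s)\,ds=\int\kappa\sin(2\pi s)\,ds=0$, a constraint your normal-variation setup would likewise have to respect. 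Unless you can supply an actual construction of your pseudoinverse together with a device that cancels the resonant obstruction (something playing the role of $e^{a}r$), the proposal does not yield the theorem.
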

\begin{rmk}
The estimates obtained are not optimal. In part (1), it is reasonable to study how the constant $C$ depend on shrinking of analytic width. In part (2) the exponent certainly has room for improvement. Since the paper is already quite technical, we opt for simplicity of proofs rather than strength of the result. 
\end{rmk}

The billiard problem has a caustic of rotation number $1/q$ if there exists a homeomorphism $u: \R \to \R$ satisfying $u(\theta +1) = u(\theta) + 1$, such that 
\begin{equation}
	\label{eq:inv-cur}
	E(r, u) := \partial_{s'} L(r, u(\theta - \alpha), u(\theta)) + \partial_s L(r, u(\theta), u(\theta + \alpha))=0, 
\end{equation}
where 
\[
	L(r; s, s') = |r(s') - r(s)|, \quad  \alpha = 1/q.
\]
This can be viewed as the ``Lagrangian setting'' compared to the ``Hamiltonian setting'' of the billiard map. In (\cite{LM2001}),  a KAM theorem is proved using this setting. 

To obtain a solution to $E(r, u) = 0$, we first perform coordinate changes to obtain approximate solutions. This is done in two steps: First we use a Lazutkin-type (\cite{Laz1973}) normal form to convert the billiard to a map close to rigid rotation. We then apply a Nekhoroshev-type (see \cite{LN1992} for more background) normal form, which in the analytic case, proves the existence of an approximate caustic with only exponential error. Note that this step is in fact done in \cite{MRT2016}, however, their result does not provide the quantitative estimates depending only on uniform conditions on the boundary. We provide an alternative approach which only uses the Lagrangian setting. 

Suppose $(r, u)$ is an approximate solution to \eqref{eq:inv-cur}, we seek  a true solution $E(\tdr, \tilde{u}) = 0$ close to $(r, u)$. Given any function $g: \T \to \R$ with $g(\theta) = \sum_{k \in \Z} g_k e^{2 \pi i k \theta}$, we define 
\[
	[g]_q = \sum_{k \in q\Z} g_k e^{2\pi k \theta} = \frac{1}{q}\sum_{i =1}^q g(\theta + i \alpha), \quad \{g\}_q = g - [g]_q.  
\]
We call a function $g$ resonant if $[g]_q = g$, and non-resonant if $[g]_q = 0$. 

Our strategy is to first deform $r$ so that the function $u_\theta E(r, u)$ has zero resonant component, then we adjust $u$ to reduce the non-resonant component. For the first step, we show in Corollary~\ref{cor:proj}  that for each $(r, u)$ there is $a: \T \to \R$ such that 
\[
	[u_\theta E(e^a r, u)]_q = 0. 
\]

Therefore, we may assume  $[u_\theta E(r, u)]_q = 0$. In this case, the method of Moser and Levi (\cite{LM2001}) provides a solution to 
\[
	u_\theta \partial_u E(r, u) \cdot v = - u_\theta E(r, u) - v \frac{d}{d\theta} E(r, u),
\]
which allows a KAM-type iteration to find a solution. Moreover, due to the identity 
\[
	E(r \circ u, \id) = u_\theta E(r, u), 
\]
we can perform the iteration at $u = \id$. 

The outline of this paper is as follows. In Section~\ref{sec:comp}, we construct the projection to non-resonant space, and recall the Moser-Levi algorithm. In Section~\ref{sec:analytic-est}, we perform basic estimates in the analytic norm. The Nekhoroshev averaging is performed in Section~\ref{sec:nek}, and the Lazutkin normal form is in Section~\ref{sec:laz} with some details deferred to the appendix. The KAM iteration is done in Section~\ref{sec:kam}, where we also prove the main theorem. 

\section{Basic computations}
\label{sec:comp}

Let us use the following notations:
\begin{enumerate}
	\item Denote $u(\theta)$ by $u$ when there is no confusion, we write $u^- = u(\theta  -\alpha)$, $u^+ = u(\theta + \alpha)$.
	\item For any function $f$ of $s$, $\Delta f$ denotes $f(u^+) - f(u)$. Under the same convention, $\Delta f^-$ denotes $f(u) - f(u^-)$. Note that the composition with $u$ is implied whenever $\Delta$ notation is used. 
\end{enumerate}
Then
\begin{equation}
	\label{eq:E}
	E(r, u)  = \partial_{s'} L(r, u^-, u) + \partial_{s} L(r, u, u^+) 
	= \left\langle   
	\frac{\Delta r^-}{|\Delta r^-|} - \frac{\Delta r}{|\Delta r|} , \dot{r}(u) 
	\right\rangle .
\end{equation}

\begin{lem}\label{lem:prjection}
Let $a: \T \to \R$ be such that $a(u^+) = a(u)$. Then 
\[
	E(e^{a}r, u) = e^{a(u)} \left( \dot{a}(u) F(r, u) +  E(r, u) \right), 
\]
where 
\[
	F(r, u) = \left\langle
	\frac{\Delta r^-}{|\Delta r^-|} - \frac{\Delta r}{|\Delta r|} , r(u)
	\right\rangle. 
\]
\end{lem}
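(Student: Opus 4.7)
The plan is to apply formula \eqref{eq:E} directly to the rescaled boundary $\tilde{r}(s) := e^{a(s)} r(s)$. The key structural observation is that the assumption $a(u^+) = a(u)$, read at $\theta$ and at $\theta - \alpha$, gives the chain of equalities $a(u^-) = a(u) = a(u^+)$. Thus the scalar factor $e^{a(s)}$ takes a common value at precisely the three arguments $u^-, u, u^+$ that appear in the definition of $E(\tilde r, u)$.

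Using this, I would compute the chord differences:
\[
\Delta \tilde r = e^{a(u^+)} r(u^+) - e^{a(u)} r(u) = e^{a(u)} \Delta r,
\]
and analogously $\Delta \tilde r^- = e^{a(u)} \Delta r^-$. Taking norms gives $|\Delta \tilde r| = e^{a(u)}|\Delta r|$ and $|\Delta \tilde r^-| = e^{a(u)}|\Delta r^-|$, so the two unit vectors appearing in \eqref{eq:E} are identical for $\tilde r$ and for $r$. In other words, the ``bracket'' factor of $E$ is invariant under the rescaling under this hypothesis.

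All the new contribution therefore comes from the tangent vector. Differentiating, $\dot{\tilde r}(s) = e^{a(s)}\bigl(\dot a(s)\, r(s) + \dot r(s)\bigr)$. Substituting into \eqref{eq:E} and using bilinearity of the inner product splits $E(\tilde r, u)$ into two pieces: one pairs the bracket with $\dot r(u)$ and reproduces $E(r, u)$, while the other pairs it with $\dot a(u) r(u)$, which is exactly $\dot a(u) F(r, u)$ by definition of $F$. Factoring out the common $e^{a(u)}$ yields the stated identity.

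I do not anticipate any real obstacle here; the lemma is a one-line verification once one notices that the hypothesis makes $a$ constant on $\{u^-, u, u^+\}$, so that the rescaling commutes with taking chord differences and only the tangent direction receives the extra $\dot a\, r$ contribution.
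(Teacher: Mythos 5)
Your proposal is correct and follows essentially the same route as the paper's proof: show that the hypothesis $a(u^-)=a(u)=a(u^+)$ makes the chord differences rescale by the common factor $e^{a(u)}$, so the unit-vector bracket in \eqref{eq:E} is unchanged, and then the product rule on $\dot{\tilde r}=e^{a}(\dot a\, r+\dot r)$ splits $E(e^{a}r,u)$ into $e^{a(u)}\bigl(\dot a(u)F(r,u)+E(r,u)\bigr)$. Your explicit remark that the hypothesis, read also at $\theta-\alpha$, gives $a(u^-)=a(u)$ (needed for the $\Delta r^-$ term) is a detail the paper leaves implicit, but otherwise the arguments coincide.
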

\begin{proof}
	Write $\tdr = e^a r$, then
	\[
		\Delta \tdr = e^{a(u^+)} r(u^+)  - e^{a(u)} r(u) = 
		e^{a(u)}\Delta r, 
	\]
	as a result, $\Delta \tdr / |\Delta \tdr| = \Delta r / |\Delta r|$. Since 
	\[
		\frac{d}{ds} \tdr = e^a\left(  \dot{a} r +  \dot{r} \right), 
	\]
	from \eqref{eq:E} we get 
	\[
		E(\tdr, u) = \dot{a}(u) e^{a(u)} \left\langle  \frac{\Delta r^-}{|\Delta r^-|} - \frac{\Delta r}{|\Delta r|} ,  r \right\rangle +  e^{a(u)} \left\langle  \frac{\Delta r^-}{|\Delta r^-|} - \frac{\Delta r}{|\Delta r|} , \dot{r} \right\rangle. 
	\]
\end{proof}

\begin{lem}\label{lem:F-const}
The function $F(r, u)$ has the following special property:
\[
	[F(r, u)]_q = [|\Delta r|]_q, 
	\quad 
	\frac{d}{d\theta}[F(r, u)]_q =  [u_\theta E(r, u)]_q. 
\]
\end{lem}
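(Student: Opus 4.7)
My plan is to prove both identities by elementary manipulations with the averaging operator $[\cdot]_q$: a discrete Abel summation for part~(1), followed by a differentiation-and-reindexing for part~(2).

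For part~(1), I expand $[F(r,u)]_q(\theta) = \tfrac{1}{q}\sum_{i=1}^{q} F(r,u)(\theta+i\alpha)$. Setting $u_i = u(\theta+i\alpha)$ and $e_i = (r(u_{i+1})-r(u_i))/|r(u_{i+1})-r(u_i)|$, the definition of $F$ shows that $F(r,u)(\theta+i\alpha) = \langle e_{i-1}-e_i,\, r(u_i)\rangle$. I then apply discrete summation by parts:
\[
\sum_{i=1}^{q}\langle e_{i-1}-e_i,\, r(u_i)\rangle
= \sum_{i=1}^{q}\langle e_i,\, r(u_{i+1})-r(u_i)\rangle,
\]
where the shifted index in the first sum is brought back by a cyclic relabeling. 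Since $e_i$ is the unit vector in the direction of $r(u_{i+1})-r(u_i)$, each summand on the right equals $|r(u_{i+1})-r(u_i)| = |\Delta r|(\theta+i\alpha)$, and dividing by $q$ gives $[F(r,u)]_q = [|\Delta r|]_q$.

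For part~(2), I use that $d/d\theta$ commutes with the finite average $[\cdot]_q$, so differentiating part~(1) yields $\tfrac{d}{d\theta}[F(r,u)]_q = [\tfrac{d}{d\theta}|\Delta r|]_q$. Writing $|\Delta r|(\theta) = L(r;u(\theta),u(\theta+\alpha))$ and applying the chain rule,
\[
\tfrac{d}{d\theta}|\Delta r|(\theta) = \partial_s L(r,u,u^+)\,u_\theta(\theta) + \partial_{s'}L(r,u,u^+)\,u_\theta(\theta+\alpha).
\]
Averaging over shifts by $i\alpha$, the $\partial_{s'}$ term at index $i$ is reindexed by $j=i+1$ so that the $u_\theta$ factor becomes $u_\theta(\theta+j\alpha)$. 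Regrouping by the factor $u_\theta(\theta+i\alpha)$ produces the bracket $\partial_{s'}L(r,u_{i-1},u_i) + \partial_s L(r,u_i,u_{i+1})$, which is exactly $E(r,u)(\theta+i\alpha)$ by \eqref{eq:E}. Thus the sum equals $\tfrac{1}{q}\sum_i u_\theta E(r,u)(\theta+i\alpha) = [u_\theta E(r,u)]_q$.

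The only subtle point throughout is the cyclic index shift: it is justified because $u(\theta+1)=u(\theta)+1$ together with the $1$-periodicity of $r$ makes the quantities $e_i$, $r(u_i)$, $u_\theta(\theta+i\alpha)$, and $\partial_s L(r,u_i,u_{i+1})$ all $q$-periodic in the integer index $i$, so shifting $i\mapsto i+1$ in a sum of $q$ consecutive terms incurs no boundary correction. Once this bookkeeping is in place, both statements reduce to one-line telescoping computations; I do not expect any genuine obstacle.
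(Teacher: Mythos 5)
Your proof is correct and follows essentially the same route as the paper: the first identity is obtained by the same cyclic index shift (summation by parts) that turns the difference of unit-vector pairings into $\langle \Delta r/|\Delta r|, \Delta r\rangle = |\Delta r|$, using the $1$-periodicity coming from $u(\theta+1)=u(\theta)+1$. For the second identity the paper only says it ``follows a standard computation,'' and your chain-rule-plus-reindexing argument is exactly that computation, so there is nothing to correct.
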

\begin{proof}
	We have 
	\[
		\begin{aligned}
			{[F(r, u)]_q}  &= \frac{1}{q} \sum_{k = 1}^q \left \langle \frac{\Delta r^-}{|\Delta r^-|} , r \right \rangle \circ (\theta + k \alpha) -
			\frac{1}{q} \sum_{k = 1}^q \left \langle \frac{\Delta r}{|\Delta r|} , r \right \rangle \circ (\theta + k \alpha) \\
			& = \frac{1}{q} \sum_{k = 1}^q \left \langle \frac{\Delta r}{|\Delta r|} , r^+ \right \rangle \circ (\theta + k \alpha) -
			\frac{1}{q} \sum_{k = 1}^q \left \langle \frac{\Delta r}{|\Delta r|} , r \right \rangle \circ (\theta + k \alpha) \\
			& = \frac{1}{q} \sum_{k = 1}^q \left \langle \frac{\Delta r}{|\Delta r|} , \Delta r \right \rangle \circ (\theta + k \alpha) = \frac{1}{q} \sum_{k = 1}^q |\Delta r| \circ  (\theta + k \alpha). 
		\end{aligned}
	\]
	The second formula follows a standard computation. 
\end{proof}

\begin{cor}\label{cor:proj}
Given $(r, u)$ let $a$ be the unique solution to 
\begin{equation}
	\label{eq:a}
	\frac{d}{d\theta} a(u) = \dot{a}(u) u_\theta = - \frac{[u_\theta E(r, u)]_q}{[F(r, u)]_q}
\end{equation}
under the periodic condition $a(u^+) = a(u)$. 
Then for $\tdr = e^a r$, we have 
\[
	[u_\theta E(\tdr, u)]_q = 0. 
\]
\end{cor}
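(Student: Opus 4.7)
The plan is essentially to substitute Lemma~\ref{lem:prjection} into the left-hand side and verify that the prescribed choice of $a$ causes the $q$-average to cancel. The crucial structural point is that the quantity $a\circ u$ behaves like a constant when passed through $[\,\cdot\,]_q$, because of the periodicity constraint $a(u^+)=a(u)$.

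More precisely, I would proceed as follows. By Lemma~\ref{lem:prjection},
\[
u_\theta E(\tdr,u) \;=\; e^{a(u)}\Bigl(\dot a(u)\,u_\theta\,F(r,u) + u_\theta E(r,u)\Bigr)
\;=\; e^{a(u)}\Bigl(\tfrac{d}{d\theta}(a\circ u)\cdot F(r,u) + u_\theta E(r,u)\Bigr).
\]
The imposed condition $a(u^+)=a(u)$ means that $a\circ u$ is $\alpha$-periodic in $\theta$. Hence both $e^{a(u)}$ and $\frac{d}{d\theta}(a\circ u)$ are $\alpha$-periodic in $\theta$; equivalently they are $q$-resonant, so they may be pulled outside the averaging operator $[\,\cdot\,]_q$. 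Taking $[\,\cdot\,]_q$ therefore yields
\[
[u_\theta E(\tdr,u)]_q \;=\; e^{a(u)}\Bigl(\tfrac{d}{d\theta}(a\circ u)\cdot [F(r,u)]_q + [u_\theta E(r,u)]_q\Bigr),
\]
and substituting the defining equation \eqref{eq:a} for $a$ makes the bracket vanish identically.

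What remains is to justify that \eqref{eq:a} actually admits a solution satisfying the periodicity constraint. Setting $g(\theta):=a(u(\theta))$, equation \eqref{eq:a} reads $g'(\theta) = -[u_\theta E(r,u)]_q/[F(r,u)]_q$, and the requirement $g(\theta+\alpha)=g(\theta)$ is an $\alpha$-periodicity condition, which is equivalent to the single solvability condition
\[
\int_0^\alpha \frac{[u_\theta E(r,u)]_q}{[F(r,u)]_q}\,d\theta \;=\; 0.
\]
Here Lemma~\ref{lem:F-const} is decisive: it tells us that $[u_\theta E(r,u)]_q = \tfrac{d}{d\theta}[F(r,u)]_q$, so the integrand is a total derivative, $\tfrac{d}{d\theta}\log[F(r,u)]_q$, which integrates to zero over any full period by periodicity of $[F(r,u)]_q$. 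Once $g$ is constructed on $\R$, invertibility of the circle homeomorphism $u$ lets us define $a$ on $\T$ via $a:=g\circ u^{-1}$, and the $\alpha$-periodicity of $g$ ensures $a(u^+)=a(u)$.

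The only delicate point—and the ``main obstacle'' in the sense of something one must actually verify rather than routinely compute—is that $[F(r,u)]_q$ is strictly positive, so the right-hand side of \eqref{eq:a} and the logarithm above are well-defined. By Lemma~\ref{lem:F-const} we have $[F(r,u)]_q = [|\Delta r|]_q$, the $q$-average of chord-lengths of the inscribed polygon traced by $u$, and the uniform convexity assumption \eqref{eq:real-assumption} together with $0\in\Omega$ and the normalization $u(\theta+1)=u(\theta)+1$ guarantees that these chords have length bounded below, so $[F(r,u)]_q>0$. With that in hand, the computation above delivers $[u_\theta E(\tdr,u)]_q=0$ directly, and uniqueness of $a$ (modulo an additive constant, corresponding to a harmless rescaling of $r$) follows from uniqueness of primitives.
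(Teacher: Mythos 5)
Your proposal is correct and follows essentially the same route as the paper: apply Lemma~\ref{lem:prjection}, use the $\alpha$-periodicity (resonance) of $a\circ u$ to pull $e^{a(u)}$ and $\frac{d}{d\theta}(a\circ u)$ through $[\,\cdot\,]_q$, and invoke Lemma~\ref{lem:F-const} to recognize the right-hand side of \eqref{eq:a} as $-\frac{d}{d\theta}\log[F(r,u)]_q$, whose integral over an $\alpha$-period vanishes, giving the required periodicity of $a\circ u$. Your additional remarks on the positivity of $[F(r,u)]_q=[|\Delta r|]_q$ and on defining $a=g\circ u^{-1}$ are harmless refinements of points the paper leaves implicit.
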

\begin{proof}
	We first need to show that \eqref{eq:a} defines a function satisfying $a(u^+) = a(u)$. By Lemma~\ref{lem:F-const} we have 
	\[
		a\circ u(\theta + \alpha) - a \circ u(\theta) = \int_{\theta}^{\theta + \alpha} \frac{d}{d \tau} \log [F(r, u)](\tau) d\tau = \log[F(r, u)]\bigr|_\theta^{\theta + \alpha} = 0. 
	\]
	Using Lemma~\ref{lem:prjection}, and noting that $a(u)$ and $\dot{a}(u) \cdot u_\theta$ are $\alpha$ periodic, we have 
	\[
		[u_\theta E(\tdr, u)]_q = e^{a(u)} \left( \dot{a}(u) u_\theta [F(r, u)]_q + [u_\theta E(r, u)]_q \right) = 0. 
	\]
\end{proof}

We now compute the linearized operator in $u$. Following \cite{LM2001}, for a function $g(\theta)$, define 
\[
	\nabla g(\theta) = g(\theta + \alpha) - g(\theta), \quad 
	\nabla^- g(\theta) = g(\theta) - g(\theta - \alpha), 
\]
and write 
\[
	L_{12}(r, u) = \partial^2_{ss'}L(r, u, u^+). 
\]

\begin{lem}\cite{LM2001}
We have 
\begin{equation}
	\label{eq:moser-levi}
	u_\theta \partial_u E(r, u)\cdot v + v \frac{d}{d\theta} E(r, u) = 
	\nabla^-\left( L_{12} u_\theta u_\theta^+ \nabla w \right),
\end{equation}
where $w = v/u_\theta$. 
\end{lem}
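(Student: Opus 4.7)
The plan is to verify this as a direct algebraic identity, recognizing it as the discrete Green's (Wronskian) formula for the Jacobi operator of the variational problem. The structural point is that $\partial_u E \cdot v$ and $\tfrac{d}{d\theta}E$ are the same tridiagonal self-adjoint operator applied, respectively, to $v$ and to $u_\theta$, and the identity is the standard conservation-law rewriting of its antisymmetric pairing.

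First I would identify the Jacobi operator. Differentiating $E(r,u)=L_2(r;u^-,u)+L_1(r;u,u^+)$ in the direction $v$, using $L_{21}=L_{12}$, the chain rule yields
\[
	\partial_u E \cdot v = L_{12}^- v^- + (L_{22}^- + L_{11})\, v + L_{12}\, v^+,
\]
where $L_{ij}^- = L_{ij}(r;u^-,u)$ and $L_{ij} = L_{ij}(r;u,u^+)$; call this expression $Pv$. Applying the same chain rule to differentiate $E$ with respect to $\theta$ gives the parallel formula
\[
	\tfrac{d}{d\theta}E = L_{12}^- u^-_\theta + (L_{22}^- + L_{11})\, u_\theta + L_{12}\, u^+_\theta = P u_\theta,
\]
the infinitesimal expression of the translation invariance of the discrete variational problem.

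Next I would form the antisymmetric Wronskian combination $u_\theta (Pv) - v(Pu_\theta)$. The symmetric middle entry $(L_{22}^- + L_{11}) u_\theta v$ cancels, leaving
\[
	u_\theta P v - v P u_\theta = L_{12}\bigl(u_\theta v^+ - u^+_\theta v\bigr) - L_{12}^-\bigl(u^-_\theta v - u_\theta v^-\bigr),
\]
and the second term is precisely the $(-\alpha)$-shift of the first, so this expression equals $\nabla^-\bigl[L_{12}(u_\theta v^+ - u^+_\theta v)\bigr]$. Substituting $w = v/u_\theta$ gives $u_\theta u^+_\theta \nabla w = u_\theta v^+ - u^+_\theta v$, so this rewrites as $\nabla^-\bigl(L_{12}\, u_\theta u^+_\theta\, \nabla w\bigr)$, matching \eqref{eq:moser-levi} up to the sign convention for the $v\tfrac{dE}{d\theta}$ term.

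The main obstacle is purely notational bookkeeping. With two evaluation points $(u^-,u)$ and $(u,u^+)$, two partial-derivative indices on $L$, and the shifts $(\cdot)^\pm$, it is easy to confuse the $(-\alpha)$-shift of $L_{12}$ (which is $L_{12}^-$) with $L_{12}$ itself, or to misassign which pair of arguments each Hessian entry is anchored at. Once the three-point operator $P$ is displayed and the relation $Pu_\theta = \tfrac{dE}{d\theta}$ is recognized, the claim reduces to the standard discrete Green's formula for a self-adjoint tridiagonal operator, which is a one-line algebraic cancellation.
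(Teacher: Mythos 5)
Your derivation is correct, and it is essentially the computation behind the cited identity: the paper offers no proof of this lemma (it is quoted from \cite{LM2001}), and your three-point operator $P$ with $\partial_u E\cdot v=Pv$, $\tfrac{d}{d\theta}E=Pu_\theta$, followed by the discrete Wronskian/Green's cancellation, is exactly the Levi--Moser argument. One point should be stated more firmly than ``up to the sign convention'': your algebra yields
\begin{equation*}
u_\theta\,\partial_u E(r,u)\cdot v \;-\; v\,\frac{d}{d\theta}E(r,u) \;=\; \nabla^-\bigl(L_{12}\,u_\theta u_\theta^+\,\nabla w\bigr),
\end{equation*}
and with the paper's definitions of $E$, $\tfrac{d}{d\theta}$ and $\nabla^-$ there is no convention under which the plus sign in \eqref{eq:moser-levi} can hold; the printed statement carries a sign typo (the matching sign flip also appears in the modified Newton equation quoted in the introduction). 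It is worth adding the sanity check that this does not affect anything downstream: in the iteration, e.g.\ Lemma~\ref{lem:nek-iter}, one works at $u=\id$ (so $u_\theta=1$, $w=v$) and chooses $v$ by \eqref{eq:v1}, i.e.\ $\nabla^-(L_{12}\nabla v)=-\{E(r,\id)\}_q$; the corrected identity then gives $\{E(r,\id)\}_q+\partial_u E(r,\id)\cdot v = v\,\tfrac{d}{d\theta}E(r,\id)$, and since only the norm $\|v\,\tfrac{d}{d\theta}E\|$ enters the error estimate, all subsequent bounds are unchanged. With that clarification your proof is complete.
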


\section{Estimates of the analytic norm}
\label{sec:analytic-est}

In this section we introduce the function space and provide basic estimates in the analytic norm. 

For $\sigma >0$, we define $\cA_\sigma$ to be the set of bounded complex analytic functions on  the set $\T_\sigma = \{|\Im \theta| < \sigma\} \in \C/\Z$ which takes real values for real $\theta$ (namely $\overline{f(\theta)} = f(\overline{\theta})$). It is a Banach space with the norm given by 
\[
	\|f\|_\sigma = \sup_{\theta \in \T_\sigma} |f(\theta)|.
\]
For $l \in \N$, define 
\[
	\cA_{\sigma, l} = \left\{ f: \T_\sigma \to \C; \quad \overline{f(\theta)} = f(\overline{\theta}), \quad  \|f\|_{\sigma, l} := \sup_{0 \le k \le l} \|f^{(k)}(\theta)\|_\sigma < \infty \right\}. 
\]
We will also consider the space $\cD_{\sigma_1, \sigma_2}$ of real analytic functions on $\T \times (0, \sigma_2)$ extensible to $\T_{\sigma_1} \times B_{\sigma_2}$, where $B_\sigma = \{h: \|h\| < \sigma\}$ is the complex ball of radius $\sigma$. Similarly define the space $\cD_{\sigma_1, \sigma_2, l}$, the norms $\|\cdot\|_{\sigma_1, \sigma_2}$ and $\|\cdot\|_{\sigma_1, \sigma_2, l}$. 

We extend the notation $|\cdot|$ as norm of vectors in $\R^2$ into a function in two complex variables, by writing
\[
	|(z_1, z_2)| = \sqrt{z_1^2 + z_2^2}, 
\]
where for the function $\sqrt{\cdot}$, we consider the analytic extension of the real function to the slit plane $\C \setminus (-\infty, 0]$.  Note that $|\cdot|$ can take a complex value and is no longer a norm in $\C^2$. We use $\|r\| = \|(z_1, z_2)\| = \sqrt{z_1\bar{z}_1 + z_2 \bar{z}_2}$ to denote the standard norm in $\C^2$, and note that $\bigl\|\,  |r| \, \bigr\| \le \|r\|$. We use the same notations $\cA, \cD$ for analytic functions into $\C^2$,  where the $\|\cdot\|$ norm is used. 

When $r$ is analytic, we will assume
there is $\sigma_0 >0$, such that
\begin{equation}
	\label{eq:std-assumption}
	\|r\|_{\sigma_0, 3} \le D. \tag{A2}
\end{equation}

For this section, we consider $r$ as fixed and $L$ a function of $(s, s')$. 
\begin{lem}\label{lem:analytic-norm}
Let $r$ satisfy \eqref{eq:real-assumption} and \eqref{eq:std-assumption},: 
\begin{enumerate}
 \item If $\sigma_0 < \frac12$, we have $|\dot{r}| \in \cA_{\sigma_0}$ with $(2D)^{-1} \le \bigl\| |\dot{r}(s)| \bigr\|_{\sigma_0} \le D$ for all $s \in \T_{\sigma_0}$. 
 \item The function $L(s, s + h)/h \in \cD_{\sigma_0/2, \sigma_0/2}$, moreover
 \[
 \min \left\| L(s, s + h)/h \right\| \ge (2D)^{-1}, \quad \left\| L(s, s +h)/h \right\|_{\sigma_0/2, \sigma_0/2, 1} \le D.
 \]
 \item The functions $\partial_1 L(s, s +h), \partial_2 L(s, s + h)\in \cD_{\sigma_0/2, \sigma_0/2}$. Moreover, there exists $C_0, D_0 >1$ depending only on $D$, such that if $\sigma < C_0^{-1}$, for all $(s, h) \in \T_{\sigma_0/2} \times B_{\sigma_0/2}$, 
 \[
 D_0^{-1} \le \left\|\partial^2_{12} L(s, s + h)/h \right\|, \, \|\sqrt{1 - \partial_1 L(s, s + h)}/h\|  \le D_0, 
 \]
 and $\|\partial^3 L(s, s +h)\|_{\sigma_0/2, \sigma_0/2} \le D_0 $, where $\partial^3$ denote arbitrary third partial derivatives of $L$. 
\end{enumerate}
\end{lem}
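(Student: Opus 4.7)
The plan is to organise the whole lemma around the analytic integral representation
\[
r(s+h) - r(s) = h \int_0^1 \dot r(s + t h)\, dt =: h\, \rho(s, h),
\]
valid whenever $s + t h$ lies in the analyticity strip of $r$ for all $t \in [0,1]$. This extracts the vanishing factor at $h = 0$ analytically, and in particular gives the identity $L(s, s+h)/h = |\rho(s, h)|$ in the extended sense defined earlier. Once this is in hand, each assertion reduces to controlling $\rho$ together with the complex square root, whose branch cut along $(-\infty, 0]$ is the only real analytic obstruction.

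For Part (1), $|\dot r(s)|^2 = \dot r_1(s)^2 + \dot r_2(s)^2$ equals $|\dot r(\mathrm{Re}\, s)|^2 \ge D^{-2}$ on the real axis by \eqref{eq:real-assumption}, while \eqref{eq:std-assumption} gives $\|\dot r_1^2 + \dot r_2^2\|_{\sigma_0} \le D^2$ with bounded complex derivatives. Shrinking $\sigma_0$ if necessary --- this is precisely what fixes $C_0$ and $D_0$ in Part (3) --- continuity from the real axis places $\dot r_1(s)^2 + \dot r_2(s)^2$ in a small disc around $[D^{-2}, D^2]$ disjoint from $(-\infty, 0]$, so $\sqrt{\cdot}$ is analytic there and the two-sided bound on $\bigl\||\dot r|\bigr\|_{\sigma_0}$ follows. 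Part (2) is the same argument applied to $\rho$: since $\rho(s, 0) = \dot r(s)$ and $\partial_h \rho$ is controlled by (A2), restricting $h \in B_{\sigma_0/2}$ keeps $\rho_1^2 + \rho_2^2$ in the same slit-plane domain, and the first-derivative norm bound comes from differentiating under the integral sign followed by a Cauchy estimate.

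For Part (3), the chain rule applied to $L = h |\rho|$ gives the explicit formulas
\[
\partial_1 L = -\frac{\langle \rho(s,h), \dot r(s)\rangle}{|\rho(s,h)|}, \qquad \partial_2 L = \frac{\langle \rho(s,h), \dot r(s+h)\rangle}{|\rho(s,h)|},
\]
which are analytic on the required domain by Parts (1)--(2); the $\|\partial^3 L\|$ bound then follows from Cauchy estimates on a slightly smaller strip. For the quantitative bound on $\partial^2_{12} L / h$, I would differentiate the identity $L\, \partial_2 L = \langle r(s+h) - r(s), \dot r(s+h)\rangle$ in $s$ to obtain the cancellation identity
\[
L\, \partial^2_{12} L = -\langle \dot r(s), \dot r(s+h)\rangle - (\partial_1 L)(\partial_2 L),
\]
whose right-hand side vanishes at $h = 0$ (both terms contribute $\pm|\dot r(s)|^2$) and in fact vanishes to order $h^2$, with leading coefficient a positive function of $|\dot r(s)|$ times $\kappa(s)$, hence bounded above and below in terms of $D$ by \eqref{eq:real-assumption}. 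This yields $\partial^2_{12} L = c(s)\, h + O(h^2)$ with $c(s) \in [D_0^{-1}, D_0]$. The bound on $\|\sqrt{1 - \partial_1 L}/h\|$, understood as the transverse (sine) component of the reflection angle (with the appropriate normalisation) divided by $h$, is handled by the same Taylor-expansion philosophy: the quantity under the square root vanishes to order $h^2$ with leading coefficient again governed by the curvature.

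The only point that needs genuine care is the uniform branch-cut argument for $\sqrt{\cdot}$: every step requires keeping the complex extensions of $|\dot r|^2$ and $|\rho|^2$ inside $\C \setminus (-\infty, 0]$ uniformly in $(s,h)$, and it is precisely this requirement that dictates the smallness threshold $\sigma < C_0^{-1}$ and the dependence of $C_0, D_0$ on $D$ alone. Once the analytic domain is fixed, the cancellation identity for $\partial^2_{12} L$ is algebraic, and every remaining estimate reduces to a routine Cauchy inequality in the Banach spaces $\cA_\sigma$ and $\cD_{\sigma_1, \sigma_2}$.
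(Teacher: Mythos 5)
Your proposal is correct and follows essentially the same route as the paper: the divided-difference function $(r(s+h)-r(s))/h$ (the paper defines it piecewise, you write it as $\int_0^1\dot r(s+th)\,dt$, which is the same analytic object), the slit-plane square root controlled by keeping the real part of $|\cdot|^2$ bounded below via derivative bounds and a small strip width, the explicit formulas for $\partial_1 L,\partial_2 L$, and a Taylor-type computation at $h=0$ for the delicate two-sided bounds in item (3), propagated to the complex domain exactly as in item (2). The one genuine variation is how you get the leading coefficient of $\partial^2_{12}L$: the paper reads it off from the appendix expansion $L(s,s+h)=h\bigl(1-\tfrac{1}{24}\kappa^2h^2+O(h^3)\bigr)$ in arclength, whereas you differentiate $L\,\partial_2L=\langle r(s+h)-r(s),\dot r(s+h)\rangle$ to get $L\,\partial^2_{12}L=-\langle\dot r(s),\dot r(s+h)\rangle-(\partial_1L)(\partial_2L)$, whose right side indeed vanishes to order $h^2$ (in arclength the coefficient is $\tfrac14\kappa^2$, so your description "a positive function of $|\dot r|$ times $\kappa$" is slightly imprecise but harmless, since (A1)--(A2) bound it above and below in terms of $D$ either way); this identity is a clean, self-contained substitute for the appendix computation. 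Note also that you implicitly corrected a typo in the statement: since $\partial_1L\to-|\dot r(s)|$ as $h\to0$, the quantity that vanishes to order $h$ is $\sqrt{1+\partial_1L}$ (equivalently $2\sin(\vartheta/2)$ in the billiard reading), and your "transverse component" interpretation is the intended one.
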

\begin{proof} Write $\sigma = \sigma_0$ for short. 

(1) Write $\dot{r}(s) = (v_1, v_2)(s)$, then $|(v_1, v_2)|^2 = v_1^2 + v_2^2$ is an analytic function on $\T_\sigma$. Since
\begin{equation}
  \label{eq:r2-bound}
  \left\| \frac{d}{ds}(v_1^2 + v_2^2) \right\| = \|v_1 \dot{v}_1 + v_2 \dot{v}_2\| \le \|\dot{r}\| \|\ddot{r}\| \le D^2, \quad s \in \T_\sigma,  
\end{equation}
we have 
\[
\Re (v_1^2(s) + v_2^2(s))  \ge D^{-2} - \left\| \frac{d}{ds}(v_1^2 + v_2^2) \right\|_\sigma \sigma \ge D^{-2} - D^2 \sigma > (2D)^{-2} . 
\]
Since  $\sqrt{\cdot}$ is extensible to  $\C \setminus {(-\infty, 0]}$, the function $|\dot{r}(s)|$ is extensible to $\T_\sigma$. The lower and upper bound for the norm follows easily from the above estimate. 

(2) Consider the function 
\[
f(s, h) = \begin{cases}
\frac{1}{h}(r(s+h) - r(s)) & h \ne 0 \\
\dot{r}(s) & h = 0, 
\end{cases}
\]
then $f$ is a real analytic function extensible to $\T_\sigma \times B_\sigma$. We have
\[
\|f(s, h)\|_{\sigma/2, \sigma/2} : = \sup_{(s, h) \in \T_{\sigma/2} \times B_{\sigma/2}} \left\| f(s, h) \right\| \le D
\]

Moreover, direct computations yield
\[
\|\partial_sf(s, h)\|_{\sigma/2, \sigma/2} , \|\partial_h f(s, h)\|_{\sigma/2, \sigma/2} \le \|r\|_{\sigma, 2} \le D. 
\]
For $(s, h)\in \T_{\sigma/2} \times B_{\sigma/2}$, denote $s_t = \Re s + t (s - \Re s)$, $h_t = th$, $t \in [0, 1]$,  we get 
\[
\begin{aligned}
& \Re |f(s, h)|^2 > f(\Re s, 0)^2 - \left\| \frac{d}{dt}|f(s_t, h_t)|^2 \right\| 
& > D^{-2} - (2D)^2\sigma > (2D)^{-2}
\end{aligned}
\]
therefore $\sqrt{|f(s, h)|^2}$ is analytic on $\T_{\sigma/2} \times B_{\sigma/2}$. The norm bounds follow from the estimates obtained so far.

(3) Finally, notice 
\[
\partial_1 L(s, s + h) = -  \left\langle  \frac{r(s + h) - r(s)}{|r(s + h) - r(s)|}, \dot{r}(s) \right\rangle 
= -  \left\langle  \frac{(r(s + h) - r(s))/h}{|r(s + h) - r(s)|/h}, \dot{r}(s) \right\rangle 
\]
is in $\cA_{\sigma/2, \sigma/2}$. Similarly for $\partial_2 L$. Direct computation shows $\partial^3 L$ are estimated by up to $3$ derivatives in $r$, hence the norm estimates. 

Finally, in Appendix~\ref{sec:gen-func} we compute the Taylor expansion of $L(s, s+h)$, when $s$ is the arclength parameter:
\[
L(s, s + h) = h\left( 1 - \frac{1}{24} \kappa^2(s) h^2 + O(h^3) \right), 
\]
which implies for real values
\[
\lim_{h \to 0+} \partial^2_{12} L(s, s + h)/h = - \frac14 \kappa(s),  \quad
\lim_{h \to 0+} \sqrt{1 - \partial_1 L(s, s + h)}/h = \frac12\kappa(s).
\]
When $s$ is not the arclength parameter, these limits will become $-\frac14|\dot{r}| \kappa$ and $\frac{1}{2\sqrt{|\dot{r}|}}\kappa$. We conclude that these limits are uniformly bounded away from $0$. Item (3) follows using an argument similar to item (2).  
\end{proof}

\begin{lem}\label{lem:func-def}
There is $C_1, D_1 > 1$ depending only on $D$, such that if  $r$ satisfy  \eqref{eq:real-assumption} and \eqref{eq:std-assumption}, and
\begin{equation}
	\label{eq:init-bound}
	\|u_\theta - 1\|_{\sigma_0} < \frac12, \quad \|u_{\theta\theta}\|_{\sigma_0} < 1,  \quad \alpha = q^{-1} < C_1^{-1} \sigma_0, 
\end{equation}
 then for $\sigma < \sigma_0/4$, $E(r, u)$ (as a function of $\theta$) is analytic on $\T_{\sigma}$. Moreover, for all $\theta \in \T_\sigma$:
 \begin{enumerate}
 \item $\|E(r, u)\| < D_1$;
 \item $D_1^{-1} \alpha \le  \|\partial_{12}L(u, u^+)\| \le D_1 \alpha$;
 \item $\|\partial^3 L(u, u^+)\| \le D_1$. 
\end{enumerate}
\end{lem}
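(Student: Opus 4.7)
The plan is to reduce this lemma to Lemma~\ref{lem:analytic-norm} by verifying that, for $\theta\in\T_\sigma$, the arguments $(u(\theta),\,u^+(\theta)-u(\theta))$ lie in the domain $\T_{\sigma_0/2}\times B_{\sigma_0/2}$ on which $L(s,s+h)$ and its derivatives have been controlled. Once this is established, analyticity of $E$ on $\T_\sigma$ follows from composing $u$ with the analytic extension of $L$, and all three bounds are essentially read off of Lemma~\ref{lem:analytic-norm} together with a sharp two-sided estimate $|h|\asymp\alpha$ for $h:=u^+-u$.

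The first step is the domain inclusion. For $\theta\in\T_\sigma$ with $\sigma\le\sigma_0/4$, write $u(\theta)=u(\Re\theta)+\int_{\Re\theta}^{\theta}u_\tau\,d\tau$; since $\|u_\theta-1\|_{\sigma_0}<1/2$, we get $|u_\theta|\le 3/2$ on $\T_{\sigma_0}$, hence $|\Im u(\theta)|\le (3/2)|\Im\theta|\le 3\sigma_0/8<\sigma_0/2$, which gives $u(\theta)\in\T_{\sigma_0/2}$. For the step $h(\theta)=\int_\theta^{\theta+\alpha}u_\tau\,d\tau=\alpha+\int_\theta^{\theta+\alpha}(u_\tau-1)\,d\tau$, one gets $|h(\theta)-\alpha|\le\alpha\|u_\theta-1\|_{\sigma_0}<\alpha/2$, so
\[
\tfrac{\alpha}{2}\le |h(\theta)|\le \tfrac{3\alpha}{2};
\]
choosing the constant $C_1$ in \eqref{eq:init-bound} larger than, say, $3$ forces $|h(\theta)|<\sigma_0/2$, so $h(\theta)\in B_{\sigma_0/2}$. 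The same reasoning applies to $h^-(\theta):=u(\theta)-u^-(\theta)$.

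With the inclusion in place, bound (3) is immediate: $\partial^3L(u,u^+)=\partial^3L(s,s+h)\big|_{s=u(\theta),\,h=h(\theta)}$ and Lemma~\ref{lem:analytic-norm}(3) gives $\|\partial^3L\|_{\sigma_0/2,\sigma_0/2}\le D_0$. For (2), Lemma~\ref{lem:analytic-norm}(3) gives $D_0^{-1}\le\|\partial^2_{12}L(s,s+h)/h\|\le D_0$, and multiplying by $\alpha/2\le|h(\theta)|\le 3\alpha/2$ from the previous step yields (2) with $D_1$ a universal multiple of $D_0$. For (1), I would use the representation \eqref{eq:E}: Lemma~\ref{lem:analytic-norm}(2) applied to $L/h$ furnishes both $\bigl\||\Delta r|/h\bigr\|\ge (2D)^{-1}$ and $\|\Delta r/h\|\le\|\dot r\|_{\sigma_0/2}\le D$, so $\|\Delta r/|\Delta r|\|\le 2D^2$, and likewise for $\Delta r^-/|\Delta r^-|$; pairing with $\|\dot r(u)\|\le D$ gives $\|E(r,u)\|\le 4D^3$.

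The only mildly delicate point, and the one that dictates the choice of $C_1$, is arranging the joint inclusion $u(\theta)\in\T_{\sigma_0/2}$ and $h(\theta)\in B_{\sigma_0/2}$ uniformly on $\T_\sigma$; this is quantitative but routine, and I anticipate no conceptual obstacle. One then verifies that the resulting $C_1,D_1$ depend only on $D$ through the constants of Lemma~\ref{lem:analytic-norm}, as required.
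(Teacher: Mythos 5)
Your proposal is correct and follows essentially the same route as the paper: check that $(u(\theta),\,u(\theta+\alpha)-u(\theta))$ lands in $\T_{\sigma_0/2}\times B_{\sigma_0/2}$ with $|u^+-u|\asymp\alpha$, then read off (1)--(3) from Lemma~\ref{lem:analytic-norm}, which is exactly what the paper does (you merely spell out the details the paper leaves implicit). The only cosmetic difference is that you obtain the two-sided bound on the step $h$ directly from $\|u_\theta-1\|<\tfrac12$, whereas the paper also invokes the second-derivative bound via an $O(\alpha^2)$ remainder.
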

\begin{proof}
The assumption	$\|u_\theta - 1\|_\sigma < \frac12$ implies $u: \T_\sigma \to \T_{2\sigma}$ is well defined. For $\theta \in \T_\sigma$, we have $\|u_\theta(\theta) \alpha  - \alpha\| < \frac\alpha{2}$, and
\[
 \frac14 \alpha < \|u(\theta + \alpha) - u(\theta)\| = \|u_\theta(\theta) \alpha\| + O(\|u\|_{\sigma, 2} \alpha^2) < 2\alpha
\]
As a result, if \eqref{eq:init-bound} holds with sufficiently large $C_1$,  $(u(\theta), u(\theta + \alpha) - u(\theta)) \in \T_{\sigma} \times B_{\sigma}$. The estimates (1)(2)(3) then follows from Lemma~\ref{lem:analytic-norm}. 
\end{proof}



We now discuss the inverse of the operators $\nabla$ and $\nabla^-$. 
\begin{lem}
	Given any function $g \in \cA_\sigma$ satisfying $[g]_\sigma = 0$, there is a unique function $\phi$ with $[\phi]_q =0$,  such that 
	\[
		\nabla \phi = g, \quad \|\phi\|_\sigma \le q \|g\|_\sigma. 
	\]
	The same holds with $\nabla$ replaced with $\nabla^-$. 
\end{lem}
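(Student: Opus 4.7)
The plan is to construct $\phi$ explicitly as a weighted finite sum of real translates of $g$, which makes both existence and the sup-norm bound transparent. Concretely, I would look for $\phi$ of the form
$\phi(\theta) = \sum_{k=0}^{q-1} c_k\, g(\theta + k\alpha)$
and compute $\nabla\phi$ by re-indexing the shifted sum. Using $1$-periodicity to fold $g(\theta+q\alpha)$ back to $g(\theta)$, and rewriting the leftover sum via the hypothesis $[g]_q = 0$ (which is just $\sum_{k=0}^{q-1} g(\theta + k\alpha) \equiv 0$), one sees that any arithmetic progression $c_k = c_0 + k/q$ gives $\nabla\phi = g$. I would fix the choice $c_k = k/q$ (or the symmetric $c_k = (k - (q-1)/2)/q$ if a slightly sharper constant is desired) to pin down a specific solution.

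Next I would verify the two remaining properties of this $\phi$. Non-resonance $[\phi]_q = 0$ follows by swapping the order in the double sum defining $[\phi]_q$ and recognizing the inner sum as $q[g]_q = 0$, independently of the value of $c_0$. For the norm estimate, each real translate preserves $\T_\sigma$ (the strip depends only on $|\Im \theta|$), so $\|g(\cdot + k\alpha)\|_\sigma = \|g\|_\sigma$, and the triangle inequality gives $\|\phi\|_\sigma \le (\sum_k |c_k|)\,\|g\|_\sigma$. With either choice of $c_k$ above this is bounded by $\tfrac{q-1}{2}\|g\|_\sigma \le q\|g\|_\sigma$, with room to spare.

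Uniqueness is essentially free: the kernel of $\nabla$ on $\cA_\sigma$ consists of $\alpha$-periodic, i.e.\ resonant, functions, so the difference of two non-resonant solutions must vanish. The $\nabla^-$ version of the lemma then reduces to the $\nabla$ version via the elementary identity $\nabla^- \phi(\theta) = \nabla \phi(\theta - \alpha)$: if $\psi$ solves $\nabla \psi = g$, then $\phi(\theta) := \psi(\theta + \alpha)$ solves $\nabla^- \phi = g$ with the same sup-norm on $\T_\sigma$ and the same non-resonance.

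I do not expect a real obstacle here; the only thing to be careful about is squeezing out the correct power of $q$ in the estimate. It is worth noting that the alternative Fourier-side inversion $\phi_k = g_k/(e^{2\pi i k \alpha} - 1)$, while immediate formally, does \emph{not} yield the sup-norm bound $\|\phi\|_\sigma \le q\|g\|_\sigma$ directly, since the small divisor $|e^{2\pi i k/q}-1|$ is of order $1/q$ only at the worst mode and a naive Cauchy estimate would lose extra factors. This is precisely why the finite-sum telescoping construction, which reads off the factor $q$ from the number of translates rather than from Fourier denominators, is the cleanest route.
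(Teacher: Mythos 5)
Your proof is correct and takes essentially the same route as the paper: both express the solution as a finite weighted sum of translates $\phi = \sum_{k}c_k\,g(\cdot + k\alpha)$ and read off the $O(q)$ bound from $\sum_k|c_k|$, using the Fourier-side formula only to identify the unique non-resonant solution. You obtain the same formula via a direct ansatz (solving for the $c_k$) rather than the paper's telescoping-then-summing device, and in the process you avoid a small typo/sign slip in the paper's intermediate display, while your closing remark about why the naive Fourier estimate loses factors of $q$ matches the paper's (unstated) reason for switching to the finite-sum representation.
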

\begin{proof}
	It's easy to see that the function we seek is given by the Fourier series $\phi = \sum \phi_k e^{2\pi i k \theta}$, where
	\[
		\begin{cases}
			\phi_k = g_k/(e^{2 \pi i k/q} - 1), & k \notin q \Z, \\
			\phi_k = 0, & k \in q \Z. 
		\end{cases}
	\]

	To get the norm estimate, we compute  $\varphi$ in a different way. 
	Since $\nabla \phi = \phi(\theta + \alpha) - \phi(\theta) = g(\theta)$, we have, for $j = 1, \cdots, q$ , 
	\[
		\phi(\theta + j \alpha) - \phi(\theta) = j g(\theta + (j-1) \alpha). 
	\]
	Sum over all $j$, we get 
	\[
		\sum_{j = 1}^q \phi(\theta + j \alpha) - q \phi(\theta) = \sum_{j = 1}^q j g(\theta + (j-1)\alpha), 
	\]
	since $\sum_{j = 1}^q \phi(\theta + j \alpha) = q[\phi]_q = 0$, then 
	\[
		\phi(\theta) = - \frac{1}{q} \sum_{j = 1}^q j g(\theta + (j-1)\alpha). 
	\]
	The norm estimate follows easily. 
\end{proof}

\begin{cor}\label{cor:inverse-bound}
Suppose $r$ satisfies condition \eqref{eq:real-assumption} and \eqref{eq:std-assumption}. Then for each $[g]_q = 0$, then there exists unique $[w]_q = 0$ such that 
\[
	\nabla^-\left( L_{12}(r, \id) u_\theta u_\theta^+ \nabla w \right) = g. 
\]
Moreover,  there is $C_2, D_2 >1 $ depending only on $D$ such that if $q^{-1} < C_2^{-1}$, and $\sigma < \sigma_0/4$, we have 
\[
	\|w\|_\sigma \le D_2 q^3 \|g\|_\sigma. 
\]
\end{cor}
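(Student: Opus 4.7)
The plan is to factor $\nabla^{-}(M\nabla \cdot)$, where $M := L_{12}(r, \id)$, into the two first-order difference operators $\nabla^{-}$ and $\nabla$ acting on the intermediate function $\phi := M\nabla w$, invert each of them via the preceding inversion lemma, and use the freedom in $\nabla^{-}$'s kernel (the $\alpha$-periodic, i.e.\ resonant, functions) to arrange the solvability condition $[\phi/M]_q = 0$ required for the second inversion. By Lemma~\ref{lem:analytic-norm}(3), for $\alpha$ sufficiently small, $M$ is analytic on $\T_{\sigma_0/2}$ with $D_0^{-1}\alpha \le |M(\theta)| \le D_0 \alpha$ pointwise, and hence $|1/M| \le D_0 q$.

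First I apply the preceding lemma to obtain $\phi_0$ with $\nabla^{-}\phi_0 = g$, $[\phi_0]_q = 0$, and $\|\phi_0\|_\sigma \le q\|g\|_\sigma$. The general solution of $\nabla^{-}\phi = g$ is then $\phi = \phi_0 + c$ with $c$ resonant. Since $c$ is resonant, $[c/M]_q = c\,[1/M]_q$, so the solvability condition $[\phi/M]_q = 0$ forces the unique choice
\[
    c = -\frac{[\phi_0/M]_q}{[1/M]_q}.
\]
With $\phi := \phi_0 + c$ fixed, a second application of the preceding lemma (to the operator $\nabla$) produces the unique $w$ with $[w]_q = 0$, $\nabla w = \phi/M$, and $\|w\|_\sigma \le q\|\phi/M\|_\sigma$.

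The main obstacle is a uniform lower bound of the form $|[1/M]_q| \gtrsim q$ on the complex strip $\T_\sigma$. On the real axis $M$ is real of a definite sign (negative, by convexity together with the Taylor expansion $L_{12} \sim -\tfrac14|\dot r|\kappa\,h$ recorded in Lemma~\ref{lem:analytic-norm}) with $|M| \ge D_0^{-1}\alpha$, so $[1/M]_q$ is real with $|[1/M]_q| \ge D_0^{-1} q$ there. To transfer this to the strip I would pass to $\tilde M := M/\alpha$, whose modulus is bounded above and below by constants depending only on $D$ on $\T_{\sigma_0/2}$ and which converges uniformly on $\T_\sigma$, as $q \to \infty$, to the function $m(\theta) := -\tfrac14|\dot r(\theta)|\kappa(\theta)$. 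The Birkhoff average $[1/\tilde M]_q(\theta) = q^{-1}\sum_{k=1}^q 1/\tilde M(\theta + k\alpha)$ is then a Riemann sum converging uniformly on $\T_\sigma$ to the $\theta$-independent integral $\int_0^1 1/m(t)\,dt$, a nonzero real number bounded away from zero in absolute value by a constant depending only on $D$. Hence $|[1/\tilde M]_q| \gtrsim 1$ on $\T_\sigma$ for $q$ large, and therefore $|[1/M]_q| = q\,|[1/\tilde M]_q| \gtrsim q$.

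Chaining the estimates, $\|\phi_0/M\|_\sigma \le D_0 q \cdot q\|g\|_\sigma$, hence $\|c\|_\sigma \lesssim q\|g\|_\sigma$ and $\|\phi\|_\sigma \lesssim q\|g\|_\sigma$; then $\|w\|_\sigma \le q\|\phi/M\|_\sigma \le D_0 q^2\|\phi\|_\sigma \lesssim q^3\|g\|_\sigma$, which is the claimed bound. Uniqueness follows by applying the same reasoning to $g = 0$: then $M\nabla w$ is resonant, say $c'$, and taking $[\cdot]_q$ of $\nabla w = c'/M$ gives $0 = [\nabla w]_q = c'\,[1/M]_q$, so $c' = 0$ by the lower bound, hence $\nabla w = 0$, and $w$ resonant with $[w]_q = 0$ forces $w = 0$.
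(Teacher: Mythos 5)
Your proposal is correct and follows essentially the same route as the paper's proof: solve $\nabla^- \phi_0 = g$, add the resonant correction $c = -[\phi_0/M]_q/[1/M]_q$ (the paper's $h_1 = -[ph]_q/[p]_q$ with $p = 1/M$) to satisfy the solvability condition for $\nabla$, then invert $\nabla$, with the same $q \cdot q \cdot q$ bookkeeping yielding the $q^3$ bound. The only notable difference is that you justify the lower bound $|[1/M]_q| \gtrsim q$ on the complex strip $\T_\sigma$ by the averaging/Riemann-sum argument, a point the paper glosses over by simply dividing by $\min_{\T_\sigma}|p|$.
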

\begin{proof}
	Let $p^{-1} = L_{12} u_\theta u_\theta^+$, we attempt to solve 
	\[
		\begin{cases}
			\nabla^- h = g \\
			p^{-1}\nabla w = h + h_1. 
		\end{cases}
	\]
	The first equation can be solved directly. For the second equation, we need to solve 
	\[
		\nabla w = ph + p h_1. 
	\]
	The equation has a unique solution if  $[ph]_q + [ph_1]_q = 0$. Suppose $h_1(\theta + \alpha) = h_1(\theta)$, then $[ph_1]_q = h_1 [p]_q$, 	therefore we can take
	\[
		h_1 = -  [ph]_q / [p]_q . 
	\]

	We now estimate the norm. 	Lemma~\ref{lem:func-def}, item (2) implies
	\[
		C^{-1} q \le \|p\|_\sigma \le C q, 
	\]
	We now have the estimate: 
	\[
		\|h_1\|_{\sigma} \le \frac{\|p\|_{\sigma} \|h\|_{\sigma}}{ \min_{\T_{\sigma}} |p|} \le C \|h\|_\sigma .
	\]
	 As a result, 
	\[
		\|w\|_\sigma \le C q \|p\|_\sigma \|h\|_\sigma \le C q^2 \|h\|_\sigma, \quad 
		\|h\|_\sigma \le q \|g\|_\sigma,\quad
		\|w\|_\sigma \le C q^3 \|g\|_\sigma. 
	\]
\end{proof}

\section{Nekhoroshev-type iteration}
\label{sec:nek}

Recall that  $r_0$ satisfies \eqref{eq:real-assumption} and \eqref{eq:std-assumption}. There is a constant $C_3> 1$ depending only on $D$, such that if 
\begin{equation}
  \label{eq:B-sigma}
  	r \in \cB_\sigma := \left\{  \|r - r_0\|_{\sigma, 3} < C_3^{-1}  \right\}, 
\end{equation}
$r$ satisfies the conditions \eqref{eq:real-assumption} and \eqref{eq:std-assumption} with $D$ replaced by $2D$. 

The goal of this section is to prove the following Nekhoroshev-type result, which allows us to reduce the non-resonant part of $E(r, \id)$ to an exponentially small function. 
\begin{prop}\label{prop:nek}
Suppose $\|r - r_0\|_{\sigma_0, 2} < (2C_3)^{-1}$, then there exists $C_4, D_4 > 1$ depending on $D$, such that if
\[
	\sigma < \sigma_0/4, \quad \|E(r, \id)\|_{\sigma} = : \epsilon  < C_4^{-1} q^{-7}, \quad q^{-1} < C_4^{-1}, 
\]
there exists an analytic diffeomorphism  $u_\nek: \T \to \T$ satisfying $\|u_\nek - \id\|_{\sigma/2}< D_4 q^3 \epsilon$, and for  $r_{\nek} = r \circ u_\nek$, we have  $\|r_{\nek} - r\|_{\sigma/2, 3} < 2 \epsilon$ and 
\[
	\left\| E(r_\nek, \id)\right\|_{\sigma/2} < D_4 \epsilon  e^{- q\sigma/2}.  
\]
\end{prop}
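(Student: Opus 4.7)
The plan is to run a Nekhoroshev-type averaging scheme consisting of $N := \lfloor q\sigma/(4\pi)\rfloor$ reparametrization steps. Set $r_0 := r$, choose a decreasing sequence of widths $\sigma_n := \sigma(1 - n/(2N))$ (so $\sigma_N = \sigma/2$ and $\delta := \sigma_n - \sigma_{n+1} = \sigma/(2N)$), and at step $n$ construct a small map $u_n = \id + v_n$ so that $r_{n+1} := r_n \circ u_n$ has smaller error $\epsilon_{n+1} := \|E(r_{n+1}, \id)\|_{\sigma_{n+1}}$. In the end, $u_\nek := u_0 \circ \cdots \circ u_{N-1}$ and $r_\nek := r \circ u_\nek = r_N$.

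Each one-step construction uses the identity $E(r \circ u, \id) = u_\theta E(r, u)$ (implicit in \eqref{eq:E}), Taylor expansion at $u = \id$, and the Moser--Levi formula \eqref{eq:moser-levi} to obtain
\[
E(r_n \circ (\id + v), \id) = E(r_n, \id) + \nabla^-\bigl(L_{12}(r_n, \id) \nabla v\bigr) + v_\theta E(r_n, \id) - v\, \tfrac{d}{d\theta} E(r_n, \id) + Q_n(v),
\]
with $Q_n(v)$ the quadratic-in-$v$ Taylor remainder. I solve $\nabla^-(L_{12}(r_n, \id) \nabla v_n) = -\{E(r_n, \id)\}_q$ by Corollary~\ref{cor:inverse-bound}, obtaining $\|v_n\|_{\sigma_{n+1}} \le D_2 q^3 \epsilon_n$, and set $u_n := \id + v_n$.

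With this choice, $E(r_{n+1}, \id) = [E(r_n, \id)]_q + (v_{n,\theta} E(r_n, \id) - v_n\, \tfrac{d}{d\theta} E(r_n, \id)) + Q_n(v_n)$. For the resonant remnant, Lemma~\ref{lem:F-const} gives $[E(r_n, \id)]_q = \tfrac{d}{d\theta}[|\Delta r_n|]_q$, which has zero mean; together with the fact that $E(r_n, \id)$ itself has zero mean (from $\int_0^1 (\partial_s L + \partial_{s'} L)(s, s+\alpha)\, ds = 0$), the projection $[E(r_n, \id)]_q$ has Fourier support on $q\Z \setminus \{0\}$. The standard Cauchy bound for Fourier coefficients of analytic functions then yields $\|[E(r_n, \id)]_q\|_{\sigma_{n+1}} \le C\epsilon_n e^{-2\pi q \delta}$. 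The cross terms and $Q_n(v_n)$ are controlled via Cauchy estimates and Lemma~\ref{lem:func-def} by $C(q^3/\delta)\epsilon_n^2$. Therefore
\[
\epsilon_{n+1} \le C \epsilon_n e^{-2\pi q\delta} + C(q^3/\delta)\epsilon_n^2.
\]
Choosing $N$ of order $q\sigma$ makes the first term $\le \epsilon_n/3$, and the inductive hypothesis $\epsilon_n \le 2^{-n}\epsilon_0$ (base case uses $\epsilon_0 < C_4^{-1} q^{-7}$) makes the quadratic term also $\le \epsilon_n/3$. Iterating $N$ times yields $\epsilon_N \le 2^{-N}\epsilon_0 \le D_4 \epsilon e^{-q\sigma/2}$; the norm bound $\|u_\nek - \id\|_{\sigma/2} \le \sum_n \|v_n\| \le 2D_2 q^3 \epsilon$ follows by summing the geometric series, and $\|r_\nek - r\|_{\sigma/2, 3}$ by the chain rule together with Cauchy estimates.

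The main obstacle is the delicate bookkeeping needed to keep each $r_n$ inside the ball $\cB_{\sigma_n}$ of \eqref{eq:B-sigma} throughout the iteration so that the uniform estimates of Section~\ref{sec:analytic-est} remain available; this is what dictates the hypothesis $\epsilon < C_4^{-1} q^{-7}$, which precisely balances the $q^3/\delta$ blow-up of the Moser--Levi inversion against the exponential decay rate $e^{-2\pi q\delta}$ allowed by taking $N$ as large as $\sim q\sigma$.
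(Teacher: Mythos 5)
Your approach is a genuine alternative to the paper's, and the underlying idea --- that $[E(r_n,\id)]_q$ has zero mean and Fourier support in $q\Z\setminus\{0\}$, hence shrinks by $e^{-2\pi q\delta}$ on each width reduction by $\delta$ --- is correct and is exactly what the paper's Lemma~\ref{lem:res-exp} encodes. Where the paper decouples the two parts (iterating $N=q$ times with $\delta=\sigma/(2q)$ to drive $\{E\}_q$ down to $2^{-q}\epsilon$, and then killing $[E]_q$ in a single shot using the full width shrink $\sigma/2\to\sigma/4$), you try to do both simultaneously, gaining a factor $e^{-2\pi q\delta}$ on the resonant remnant at every step. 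However, there are two quantitative gaps. First, the quadratic remainder: from \eqref{eq:moser-levi} and the quadratic Taylor term one gets, as in Lemma~\ref{lem:nek-iter}, a bound of the form $D_5\bigl(q^3\eta/(\sigma-\sigma')+q^6\epsilon\bigr)\epsilon$; with your choice $\delta\sim 1/q$ the term $q^3/\delta\sim q^4$ is dominated by the $q^6$ from $\|\partial^3 L\|\,\|v\|^2\sim q^6\epsilon_n^2$, which you omitted. Second, and more seriously, the final step is wrong as written: with $N=\lfloor q\sigma/(4\pi)\rfloor$ and a per-step contraction factor of $1/2$, you get $2^{-N}\epsilon_0=\epsilon_0\,e^{-q\sigma\ln 2/(4\pi)}$, and since $\ln 2/(4\pi)\approx 0.055<1/2$, the inequality $2^{-N}\epsilon_0\le D_4\epsilon e^{-q\sigma/2}$ forces $D_4$ to grow like $e^{cq\sigma}$, so it cannot be a $q$-independent constant. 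To actually reach $e^{-q\sigma/2}$ your per-step factor must itself be $\sim C e^{-2\pi q\delta}$ rather than the crude $1/2$, which in turn requires the quadratic term $Cq^6\epsilon_n^2$ to be small compared to $Ce^{-2\pi q\delta}\epsilon_n$ --- a much more stringent smallness condition on $\epsilon_n$ that needs to be checked and propagated through the induction. This is exactly the bookkeeping the paper sidesteps by handling $[E]_q$ separately at the end, where a single application of Lemma~\ref{lem:res-exp} with the global width loss $\sigma/2\to\sigma/4$ yields the $e^{-\pi q\sigma/4}$ factor with no interference from the iteration. Your route can likely be made to work, but it trades the paper's clean decoupling for a delicate balance between the number of steps, the per-step width loss, and the initial smallness of $\epsilon$, and as stated the constants do not close.
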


We now describe the iteration process. 
\begin{lem}
	[Iteration lemma for Moser-Levi procedure] \label{lem:nek-iter} Suppose $\|r - r_0\|_{\sigma, 3} < (2C_3)^{-1}$, 
	and let $v$ solve
	\begin{equation}
		\label{eq:v1}
		\nabla^-(L_{12}(r, \id) \nabla v) = - \{E(r, \id)\}_q, 
	\end{equation}
	then there exists constants $C_5, D_5 > 1$ depending only on $D$ such that: if for $\eta, \epsilon > 0$ and $0 < \sigma' < \sigma < \sigma_0/4$,  
	\[
	  \|E(r, \id)\|_\sigma < \eta, \quad  \|\{E(r, \id)\}_q\|_\sigma < \epsilon, \quad C_5 q^3 \epsilon < \sigma - \sigma', \quad q^{-1} < \min\{C_5^{-1}, C_1^{-1} \sigma_0\}, 
	\]
	where $C_1$ is from \eqref{eq:init-bound}, then $r_+ = r \circ (\id + v) \in \cA_{\sigma', 3}$, 
	\[
		\|r_+ - r\|_{\sigma', 3} < \frac{D_5 q^3\epsilon}{(\sigma - \sigma')^3} , \quad
		\|v\|_{\sigma'} < D_5  q^3 \epsilon, 
	\]
	and 
	\[
		\left\| E(r_+, \id) - [E(r, \id)]_q  \right\|_{\sigma'}, \, \,  \left\| \{E(r_+, \id)\}_q  \right\|_{\sigma'} \le  D_5 \left( \frac{  q^3 \eta }{(\sigma - \sigma')} +  q^6 \epsilon \right) \epsilon. 
	\]
\end{lem}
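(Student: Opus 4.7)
The plan is to solve the linearized equation \eqref{eq:v1} for $v$ via Corollary~\ref{cor:inverse-bound}, then analyze the new error $E(r_+,\id)$ through the identity $E(r\circ u,\id) = u_\theta E(r,u)$ taken with $u = \id + v$, reducing the analysis to a one-step Newton linearization.

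First, since $r \in \cB_\sigma$, the estimates of Lemmas~\ref{lem:analytic-norm} and~\ref{lem:func-def} apply with constants adjusted from $D$ to $2D$. Applying Corollary~\ref{cor:inverse-bound} to $g = -\{E(r,\id)\}_q$ (note $[g]_q = 0$ by construction, and the projection satisfies $\|\{g\}_q\|_\sigma \leq 2\|g\|_\sigma$ in the sup norm) produces $v$ with $[v]_q = 0$ and
\[
 \|v\|_\sigma \leq D_2 q^3 \|\{E(r,\id)\}_q\|_\sigma \leq 2 D_2 q^3 \epsilon,
\]
which yields bound (2) with $D_5 \geq 2D_2$. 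Choosing $C_5 \geq 2D_2$ ensures $\|v\|_{\sigma'} < \sigma - \sigma'$, so $\id + v : \T_{\sigma'} \to \T_\sigma$ is well defined; a Cauchy estimate also gives $\|v_\theta\|_{\sigma'} \leq 2D_2 q^3\epsilon/(\sigma-\sigma') < 1/2$, so $\id + v$ is an analytic diffeomorphism. Bound (1) on $\|r_+ - r\|_{\sigma',3} = \|r\circ(\id+v) - r\|_{\sigma',3}$ then follows by writing $r_+(\theta) - r(\theta) = \int_0^1 \dot r(\theta + tv)\,dt\cdot v$, applying $\|\dot r\|_{(\sigma+\sigma')/2}$ bounds from the hypothesis $\|r-r_0\|_{\sigma,3} < (2C_3)^{-1}$, and using Cauchy's inequality to absorb the three $\theta$-derivatives at an analytic-width loss of $\sigma - \sigma'$.

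The heart of the lemma is bound (3). From the identity $E(r_+,\id) = (1 + v_\theta)E(r,\id+v)$, Taylor expand in $v$:
\[
 E(r,\id+v) = E(r,\id) + \partial_u E(r,\id)\,v + R_1, \qquad \|R_1\|_{\sigma'} \lesssim \|\partial_u^2 E\|_{\sigma'} \|v\|_{\sigma'}^2 \lesssim q^6\epsilon^2,
\]
the last bound using that $\partial_u^2 E$ involves third partial derivatives of $L$, bounded by Lemma~\ref{lem:func-def}(3). Specialize the Moser-Levi identity \eqref{eq:moser-levi} to $u=\id$ (so $u_\theta = 1$ and $w = v$) and combine with the defining equation \eqref{eq:v1} to get
\[
 \partial_u E(r,\id)\,v = \nabla^-(L_{12}(r,\id)\nabla v) - v\tfrac{d}{d\theta}E(r,\id) = -\{E(r,\id)\}_q - v\tfrac{d}{d\theta}E(r,\id).
\]
Substituting and expanding $(1+v_\theta)(E + \partial_u E\,v + R_1)$, the $v^0$ and $v^1$ contributions collapse to
\[
 E(r,\id) - \{E(r,\id)\}_q + \bigl(v_\theta E(r,\id) - v\tfrac{d}{d\theta}E(r,\id)\bigr) = [E(r,\id)]_q + v_\theta E - v E_\theta,
\]
leaving the quadratic remainders $R_1 + v_\theta\,\partial_u E(r,\id)v + v_\theta R_1$. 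By Cauchy $\|v_\theta\|_{\sigma'}, \|E_\theta\|_{\sigma'}$ are at most $\|v\|_\sigma/(\sigma-\sigma')$ and $\eta/(\sigma-\sigma')$ respectively, so $\|v_\theta E - v E_\theta\|_{\sigma'} \lesssim q^3\epsilon\eta/(\sigma-\sigma')$. The quadratic pieces are bounded by $q^6\epsilon^2$ after using $\|\partial_u E(r,\id)v\|_{\sigma'} \lesssim \epsilon + q^3\epsilon\eta/(\sigma-\sigma')$ and absorbing cross terms using $q^3\epsilon < \sigma-\sigma'$. This gives the stated bound for $\|E(r_+,\id) - [E(r,\id)]_q\|_{\sigma'}$, and applying $\{\cdot\}_q$ (which has operator norm $\leq 2$ in sup norm) yields the same bound for $\|\{E(r_+,\id)\}_q\|_{\sigma'}$.

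The main obstacle is the quadratic remainder $R_1$: one must verify that $\partial_u^2 E$ is bounded by a constant (from $\|\partial^3 L\|$ in Lemma~\ref{lem:func-def}(3)) rather than inheriting a singular factor $1/\alpha = q$, since such a factor would destroy the exponent $q^6$. A secondary bookkeeping challenge is balancing Cauchy derivative losses so the final $q$-powers agree with what the Nekhoroshev-type iteration of Proposition~\ref{prop:nek} consumes; the smallness conditions $C_5 q^3\epsilon < \sigma-\sigma'$ and $q^{-1} < C_5^{-1}$ are dictated precisely by these bookkeeping constraints.
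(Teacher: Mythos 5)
Your proposal is correct and follows essentially the same route as the paper: solve \eqref{eq:v1} via Corollary~\ref{cor:inverse-bound} (with the constants of $r_0$ transferred to $r$ through the $\cB_\sigma$ condition), bound $r_+-r$ by composition plus Cauchy estimates, and control $E(r_+,\id)-[E(r,\id)]_q$ by a one-step Taylor expansion whose linear part is cancelled through the Moser--Levi identity at $u=\id$ combined with \eqref{eq:v1}, the quadratic remainder being bounded by the $\partial^3 L$ estimate of Lemma~\ref{lem:func-def}(3). If anything, your treatment is slightly more explicit than the paper's, since you track the $(1+v_\theta)$ prefactor coming from $E(r\circ u,\id)=u_\theta E(r,u)$ and the resulting cross terms, which the paper absorbs silently into the same $q^3\eta\epsilon/(\sigma-\sigma')+q^6\epsilon^2$ bound.
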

\begin{proof}
By Corollary~\ref{cor:inverse-bound},  $\|v\|_\sigma \le C q^3 \epsilon$. 
	Let $\delta = (\sigma - \sigma')/2$, for sufficiently large $C_5$, we have $\|v\|_\sigma < \delta$ and therefore $u = \id + v: \T_{\sigma'} \to \T_\sigma$ is well defined. Moreover, for sufficiently mall $\epsilon$, the conditions \eqref{eq:init-bound} is satisfied. We have
	\[
		\|r_+ - r\|_{\sigma - \delta} = \|r\circ (\id + v) - r\|_{\sigma - \delta} \le \|r\|_{\sigma} \|v\|_{\sigma - \delta} \le C q^3 \epsilon.
	\]
	and
	\[
		\|r_+ - r\|_{\sigma- 2\delta, 3} = \delta^{-3} \|r_+ - r\|_{\sigma - \delta} \le \frac{Cq^3\epsilon}{(\sigma - \sigma')^3} . 
	\]

By Lemma~\ref{lem:analytic-norm}, item (3),  
	\[
		\left\| \frac{d^2}{dt^2} E(r, \id + tv) \right\|_{\sigma} \le C \|\partial^3 L\|_\sigma \|v\|_\sigma^2 \le C \|v\|_\sigma^2, 
	\]
	and 
	\[
		\begin{aligned}
			& \left\| E(r_+, \id) - [E(r, \id)]_q  \right\|_{\sigma'} \le  \|E(r, \id) - [E(r, \id)]_q + \partial_u E(r, \id) \cdot v\|_{\sigma'} + C  \|v\|_\sigma^2 \\
			& \le \| v \frac{d}{d\theta} E(r, \id)\|_{\sigma'} + C \|v\|_\sigma^2
			\le C \frac{q^3 \eta \epsilon}{\sigma - \sigma'} + C q^6 \epsilon^2. 
		\end{aligned}
	\]
	We get our final estimate by applying the operator $\{\cdot\}_q$ to the above formula. 
\end{proof}

\begin{proof}[Proof of Proposition~\ref{prop:nek}]
We now perform the Nekhoroshev iteration. As before $C>1$ denote a generic constant, but in this proof $C$ may depend on both $D$ and $\sigma_0$. 

We take $\delta = \sigma/(2q)$, define $\epsilon_1 = \epsilon$, $\eta_1 = \epsilon$ and $\sigma_n = \sigma_1 - (n-1)\delta$, define $\sigma_n = \sigma_1 - (n-1)\delta$, let 
$v_n$ solve \eqref{eq:v1} for $r = r_{n-1}$,  define  $r_n = r_{n-1} \circ (\id + v_n)$.  Then we have 
\[
\begin{aligned}
 	\epsilon_2 & = \left\| E(r_2, \id) - [E(r_1, \id)]_q \right\|_{\sigma_2} < D_4 \left( \frac{q^3 \eta_1}{\delta_*}+ q^6 \epsilon_1 \right) \epsilon_1  \\
  &\le  (2D_4 q^6 \epsilon_1)\epsilon < \frac{\epsilon_1}2  
\end{aligned}
\]
since $q^{-1} < C \sigma$ and $\epsilon < C q^{-7}$ implies $q^3\epsilon/\delta = q^4\epsilon_1/\sigma < q^6 \epsilon_1$, and $2D_4 q^6 \epsilon_1 < \frac12$. Moreover,
\[
	\|r_2 - r_1\|_{\sigma_2, 3} < \frac{D_4 q^3 \epsilon_1}{\delta^3} < D_4 q^6 \epsilon < D_4 q^{-2} < (4C_3)^{-1}
\]
as long as $q^{-1} < C_4^{-1}$ is small enough.

 We now check that as long as $\sigma_n = \sigma_1 - n \delta > 0$, the following holds inductively:
\begin{enumerate}
\item $\epsilon_n = \left\| E(r_n, \id) - [E(r_{n-1}, \id)]_q \right\|_{\sigma_n} < 2^{-(n-1)}\epsilon_1$. 
\item For $n \ge 2$, $\|r_n - r_{n-1}\|_{\sigma_n} < 2^{-(n-1)} (4C_3)^{-1}$, and $\|r_n - r_1\|_{\sigma_n} < (2C_3)^{-1}$, in particular, $r_n \in B_{\sigma_n}$.  
\item $\eta_n = \|E(r_n, \id)\|_{\sigma_n} \le \epsilon_n + \eta_{n-1} < 2 \epsilon$. 
\item $u_n = (\id + v_n) \circ \cdots \circ (\id + v_1)$ satisfies $\|u_n - \id\|_{\sigma_n} < C q^3\epsilon$. 
\end{enumerate}
Pick $N = q$, we get $\sigma_N = \sigma - q\delta = \sigma/2$, the above estimates implies $\|E(r_N, \id)\|_{\sigma/2} < 2\epsilon$, and
\[
	  \|\{E(r_N, \id)\}_q\|_{\sigma/2} = \epsilon_N < 2^{-q} \epsilon = \epsilon e^{-(\log 2) q}. 
\]
On the other hand, using $\|E(r_N, \id)\|_{\sigma/2} \le 2\epsilon$, and by applying a standard estimate (Lemma~\ref{lem:res-exp}), we have 
\[
	\left\| [E(r_N, \id)]_q \right\|_{\sigma/4} \le 2\epsilon e^{-\frac14 \pi q \sigma} < 2\epsilon e^{-\sigma q/2}.     
\]
For small $\sigma$ the second upper bound is larger. The proposition follows by taking $u_\nek = u_N$, $r_\nek = r_N$, and taking the sum of the two upper bounds obtained. 
\end{proof}

\begin{lem}\label{lem:res-exp}
Let $f\in \cA_{\sigma}$ satisfy  $[f]_q = f$ , $\int_0^1 f(\theta) d\theta = 0$  and $q^{-1} < C \sigma$, then there is an explicit $C' > 0$ depending only on $C$ such that 
\[
	\|f\|_{\sigma/2} < C' e^{-\frac12 \pi q\sigma} \|f\|_\sigma. 
\]
\end{lem}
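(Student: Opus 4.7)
The plan is to reduce the statement to a direct Fourier-analytic estimate on the Laurent expansion of $f$. Since $[f]_q = f$, the Fourier expansion of $f$ is supported on the arithmetic progression $q\Z$, and the hypothesis $\int_0^1 f = 0$ eliminates the $k=0$ mode, leaving
\[
f(\theta) = \sum_{k \in q\Z \setminus \{0\}} f_k\, e^{2\pi i k \theta}.
\]

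Next I would apply the standard Cauchy estimate to the Fourier coefficients. Using the shift of contour $\theta \mapsto \theta - i\sigma\,\mathrm{sgn}(k)$, which is permissible because $f \in \cA_\sigma$, one obtains $|f_k| \le \|f\|_\sigma\, e^{-2\pi |k|\sigma}$ for every nonzero $k$. Then for any $\theta \in \T_{\sigma/2}$ we have $|e^{2\pi i k \theta}| \le e^{\pi |k|\sigma}$, so the triangle inequality yields
\[
|f(\theta)| \le \|f\|_\sigma \sum_{k \in q\Z\setminus\{0\}} e^{-2\pi|k|\sigma + \pi |k|\sigma} = 2\|f\|_\sigma \sum_{j=1}^{\infty} e^{-\pi q j \sigma} = \frac{2\,\|f\|_\sigma\, e^{-\pi q \sigma}}{1 - e^{-\pi q \sigma}}.
\]

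Finally I would use the hypothesis $q^{-1} < C\sigma$, i.e.\ $q\sigma > 1/C$, to control the geometric-series denominator: $1 - e^{-\pi q \sigma} \ge 1 - e^{-\pi/C}$, a positive constant depending only on $C$. Writing $e^{-\pi q\sigma} = e^{-\pi q\sigma/2} \cdot e^{-\pi q\sigma/2}$ and estimating the second factor by $1$ gives
\[
\|f\|_{\sigma/2} \le \frac{2}{1 - e^{-\pi/C}}\, e^{-\frac12 \pi q \sigma}\, \|f\|_\sigma,
\]
so one may take $C' = 2/(1 - e^{-\pi/C})$. There is no real obstacle here; the only subtle point is the interplay between the hypothesis $q\sigma \gtrsim 1$ (which ensures the denominator stays bounded away from zero and the constant $C'$ depends only on $C$) and the fact that the naive bound actually produces the stronger exponent $e^{-\pi q \sigma}$, of which the stated bound $e^{-\frac12\pi q\sigma}$ keeps only half for convenience.
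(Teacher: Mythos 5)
Your proof is correct and follows essentially the same route as the paper: Fourier support on $q\Z\setminus\{0\}$, Cauchy decay of the coefficients, summation of the resulting geometric series on $\T_{\sigma/2}$, and the hypothesis $q\sigma > C^{-1}$ to keep the constant depending only on $C$. Your bookkeeping (which actually produces the sharper exponent $e^{-\pi q\sigma}$ before relaxing to $e^{-\frac12\pi q\sigma}$) is in fact cleaner than the paper's, which splits the exponent inside the sum instead.
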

\begin{proof}
Our conditions implies $f(\theta) = \sum_{k \in \Z \setminus\{0\}} f_{kq} e^{2\pi i k q \theta}$. Denote $\epsilon = \|f\|_\sigma$, standard estimates of the analytic function implies $|f_k| \le \epsilon e^{- 2\pi k q \sigma}$, then 
\[
	\|f\|_{\sigma/2} \le  |\sum_{k \in \Z\setminus \{0\} } f_k e^{2\pi q k\sigma/2}| \le \epsilon e^{-2\pi q\sigma/4} \sum_{k \in \Z \setminus \{0\}}  e^{-2\pi kq\sigma/4} \le C' \epsilon e^{-2\pi q\sigma/4}, 
\]
where  we used $\sigma q > C^{-1}$, and set $C' = \frac{1}{1 - e^{\frac12 \pi C^{-1}}}$. 
 \end{proof}

\section{Lazutkin-type normal form and smooth approximation}
\label{sec:laz}

In this section we perform an initial step of normal form due to Lazutkin (\cite{Laz1973}). For this, we use the billiard map for the first time. In this section, $s$ will be the arclength parameter, namely $|\dot{r}(s)| = 1$ for all $s \in \T$. Let $\vartheta$ denote the angle between the outgoing billiard ray with the positive tangent vector at the impact point.  Near the boundary (see for example \cite{Laz1973}), the billiard map  can be written approximately as:
\[
	T(s, \vartheta) = \bmat{ s + 2 \rho(s) \vartheta + O(\vartheta^2) \\ \vartheta - 2 \rho'(s) \vartheta^2/3 + O(\vartheta^3)},
\]
where $\rho(s)$ is the radius of curvature at $r(s)$.

\begin{lem}[See also \cite{MRT2016}, Proposition 5]\label{lem:billiard-analytic}
There exists a constant $C_5>1$ depending only on $D$ such that: if $r$ satisfies conditions \eqref{eq:real-assumption} and \eqref{eq:std-assumption}, and assume $\sigma_0 < C_5^{-1}$. Then the map $T$ is real analytic, and can be extended to a complex analytic map on  $\T_{\sigma_0/2} \times B_{\sigma_0/2}$. Moreover, for each $\sigma < \sigma_0$
\[
	\|T\|_{\sigma} \le C_5. 
\]
\end{lem}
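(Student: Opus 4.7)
My approach is to define $T$ implicitly via the generating function $L$, then use Lemma~\ref{lem:analytic-norm}(3) to remove the degeneracy at $\vartheta=0$ and apply the complex implicit function theorem.

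First, I would translate the reflection law into relations on $L$. If the trajectory leaves $r(s)$ with outgoing angle $\vartheta$, strikes the boundary next at $r(s+h)$, and leaves again with outgoing angle $\vartheta'$, then standard differentiation of $L(s,s')=|r(s')-r(s)|$ together with the reflection identity yield
\[
1+\partial_1 L(s,s+h)=2\sin^2(\vartheta/2),\qquad 1+\partial_2 L(s,s+h)=2\sin^2(\vartheta'/2).
\]
Thus the only genuinely implicit step is solving for $h$ as an analytic function of $(s,\vartheta)$; the new angle $\vartheta'$ is then an explicit analytic expression.

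Second, I would invoke Lemma~\ref{lem:analytic-norm}(3) to factor out the double zero at $h=0$. The lemma guarantees that $\bigl(1+\partial_i L(s,s+h)\bigr)/h^2$ extends to an analytic function on $\T_{\sigma_0/2}\times B_{\sigma_0/2}$, bounded above by $D_0^2$ and below in modulus by $D_0^{-2}$. Provided $\sigma_0<C_5^{-1}$ is chosen small enough (depending only on $D$) the values of this quotient stay in a fixed complex half-plane, so the principal square root defines an analytic, non-vanishing $\psi_i(s,h)\in\cD_{\sigma_0/2,\sigma_0/2}$ with $1+\partial_i L(s,s+h)=h^2\psi_i^2(s,h)$. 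The equation for $h$ becomes
\[
h\,\psi_1(s,h)=\sqrt{2}\,\sin(\vartheta/2).
\]

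Third, I would apply a quantitative complex inverse function theorem to $h\mapsto h\,\psi_1(s,h)$. Since $\partial_h(h\psi_1)|_{h=0}=\psi_1(s,0)$ is bounded below by $D_0^{-1}$ uniformly in $s$, and $\partial_h^2(h\psi_1)$ is bounded above via Cauchy estimates on $\|\psi_1\|_{\sigma_0/2,\sigma_0/2}$, a Banach contraction argument for the inverse yields $h(s,\vartheta)$ analytic on $\T_{\sigma_0/2}\times B_{\sigma_0/2}$ (after possibly absorbing a further universal shrinkage of $\sigma_0$ into $C_5$), with a uniform bound $|h|\le C|\sin(\vartheta/2)|\le C|\vartheta|$. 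Substituting back gives
\[
\vartheta'=2\arcsin\!\left(h(s,\vartheta)\,\psi_2(s,h(s,\vartheta))/\sqrt{2}\right),
\]
analytic on the same domain, and combining the bounds on $h$ and $\vartheta'$ produces $\|T\|_\sigma\le C_5$.

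The main obstacle is the quantitative domain bookkeeping: the implicit solution $h(s,\vartheta)$ must remain inside $B_{\sigma_0/2}$, the ball on which $\psi_i$ is controlled, for all $\vartheta\in B_{\sigma_0/2}$. Balancing the lower bound on $\psi_1(s,0)$ against the upper bound on $\partial_h\psi_1$ forces an initial shrinkage of $\sigma_0$ by a constant depending only on $D$, which is precisely the role of the hypothesis $\sigma_0<C_5^{-1}$. Once this shrinkage is made, the iteration closes and all estimates follow.
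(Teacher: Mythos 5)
Your proposal follows essentially the same route as the paper: both turn the reflection law into the generating-function relation $2\sin(\vartheta/2)=\sqrt{2(1+\partial_1 L(s,s+h))}$, use Lemma~\ref{lem:analytic-norm}(3) to desingularize the right-hand side at $h=0$ (you factor $1+\partial_1L=h^2\psi_1^2$; the paper equivalently bounds $\partial_h\sqrt{2(1+\partial_1L)}$ away from zero via the quotient $\bigl(\partial_{12}L/h\bigr)\big/\bigl(\sqrt{2(1+\partial_1L)}/h\bigr)$), and then solve for $h(s,\vartheta)$ by a quantitative implicit/inverse function theorem on $\T_{\sigma_0/2}\times B_{\sigma_0/2}$, recovering $\vartheta'$ explicitly afterwards. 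The only slip is a sign in the second relation: since $\partial_2L(s,s+h)\to+1$ as $h\to0^+$, it should read $1-\partial_2L(s,s+h)=2\sin^2(\vartheta'/2)$ (as in the paper's proof), so the desingularized quantity for the outgoing angle is $(1-\partial_2L)/h^2$ rather than $(1+\partial_2L)/h^2$; this does not affect the structure of your argument.
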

\begin{proof}
Suppose $T(s, \vartheta) = (s^+, \vartheta^+)$, and write $h =s^+ - h$, then
\[
	\cos \vartheta = \partial_1 L(s, s + h), \quad  - \cos \vartheta^+ = \partial_2 L(s, s + h). 
\]
For real $h > 0$, the above equations are equivalent to (see also Appendix~\ref{sec:gen-func})
\[
2\sin(\vartheta/2) = \sqrt{2(1 + \partial_1 L(s, s + h))}, \quad 
2\sin(\vartheta^+/2) = \sqrt{2(1 - \partial_2 L(s, s + h))}. 
\]
According to Lemma~\ref{lem:analytic-norm}, the derivative 
\[
\partial_h \sqrt{2(1 + \partial_1 L(s, s + h))} = \frac{\partial_{12}L(s, s + h)}{\sqrt{2(1 + \partial_1 L(s, s + h))}} = \frac{\partial_{12}L(s, s + h)/h}{\sqrt{2(1 + \partial_1 L(s, s + h))}/h}
\]
is analytic on $\T_{\sigma_0/2} \times B_{\sigma_0/2}$ uniformly bounded away from $0$. As a result, the implicit function applies, which also come with an estimate on the norm. 
\end{proof}

\begin{prop}\label{prop:gen-laz}
Under the same assumption as Lemma~\ref{lem:billiard-analytic}, for each $k \ge 0$, there exists a real analytic coordinate change $\Phi = \Phi_{2k + 4}: (s, \vartheta) \mapsto (x, y)$, such that $T_{\laz} = \Phi \circ T \circ \Phi^{-1}$ takes the form:
\[
	T_{\laz}(x, y) = (x + y + y^{2k+4} f(x, y), y + y^{2k+4} g(x, y)). 
\]
Moreover, there is a constant $C_6>1$ depending only on $D$ and $k$ such that, for every $\sigma < \sigma_0/2$, we have 
\[
	\|f\|_{\sigma}, \|g\|_{\sigma} \le C_6 (\sigma_0/2 - \sigma)^{-2k-3} . 
\]
\end{prop}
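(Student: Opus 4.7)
The plan is to first apply the classical Lazutkin coordinate change to reach a convenient base form, then iteratively eliminate successive orders in $y$ by symplectic near-identity conjugations, and finally rescale $y$ to absorb the Marvizi--Melrose-type twist coefficients.

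\textbf{Initial Lazutkin coordinates.} Since $s$ is the arclength parameter, the generating function admits the expansion $L(s, s+h) = h - \frac{1}{24}\kappa^2(s) h^3 + \cO(h^4)$ from the Appendix. Following the classical recipe, set $x(s) \propto \int_0^s \rho(\tau)^{-2/3}\, d\tau$ and $y(s, \vartheta) \propto \rho(s)^{1/3}\sin(\vartheta/2)$, with constants chosen so that $x$ has period $1$ and the leading twist coefficient equals $1$. Lemmas~\ref{lem:analytic-norm} and~\ref{lem:billiard-analytic} provide the analytic extension of this coordinate change to $\T_{\sigma_0/2} \times B_{\sigma_0/2}$, and a direct Taylor computation yields $\Phi_0 \circ T \circ \Phi_0^{-1}(x, y) = (x + y, y) + \cO(y^4)$.

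\textbf{Iterative normalization and final rescaling.} Inductively assume that in current coordinates
\[
T(x, y) = (x + \omega(y) + y^N A(x, y),\; y + y^{N+1} B(x, y))
\]
for some polynomial $\omega(y)$ with $\omega'(0) = 1$; the fact that the $y$-component of the perturbation starts at $y^{N+1}$ rather than $y^N$ is forced by $\det DT = 1$, as a direct leading-order Jacobian computation shows. To push the error by one further order, apply a symplectic near-identity coordinate change $\Phi_N$ generated by $G(x, Y) = xY + Y^{N-1}\chi(x, Y)$ with $\chi$ to be determined. Matching terms at leading order reduces to a cohomological equation of the form
\[
\chi(x + y, y) - \chi(x, y) = A(x, 0) - \omega_N,
\]
where the constant $\omega_N := \int_0^1 A(x, 0)\, dx$ is absorbed into a new coefficient of $\omega$, rendering the right-hand side mean-zero in $x$. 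This equation is solvable: expanding $\chi(x, y) = \sum_j \chi_j(x) y^j$ produces a triangular system $\chi_0'(x) = A(x, 0) - \omega_N$ and analogous mean-zero equations at each higher order in $y$. After $2k$ iterations the map has the form $(x + \omega(y), y) + \cO(y^{2k+4})$ with $\omega(y)$ polynomial of degree at most $2k+3$. Since $\omega'(0) = 1$, the analytic change $\tilde y = \omega(y)$ is invertible near $y = 0$, and in the new coordinate the map takes the required form $(x + \tilde y, \tilde y) + \cO(\tilde y^{2k+4})$.

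\textbf{Analytic estimates and main obstacle.} For the quantitative bound, allocate an analyticity loss $\delta = (\sigma_0/2 - \sigma)/(4k+6)$ at each of the $2k$ iteration steps; each step costs Cauchy factors of order $\delta^{-1}$ from differentiating the generating function and from inverting the implicit relation defining $\Phi_N$. Compounding the losses over all iterations, together with the initial Lazutkin change and the final $\tilde y$-rescaling, yields the stated bound $C_6(\sigma_0/2 - \sigma)^{-(2k+3)}$, with $C_6$ depending on $D$ and $k$ through the number of iterations. The main obstacle is the analytic bookkeeping: each iteration simultaneously shrinks both the $x$-strip and the $y$-disk while re-expressing the perturbation through a new symplectic generating datum, and the estimates must compose cleanly over $2k$ steps without degradation of the constants in $D$. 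A secondary subtlety is to ensure at every stage that the symplectic relation between the $x$- and $y$-components of the perturbation is preserved, so that the $y$-perturbation remains one order higher in $y$ than the $x$-perturbation throughout the iteration; otherwise the scheme would not terminate at the claimed order $y^{2k+4}$ in both components.
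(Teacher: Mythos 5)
Your route (classical Lazutkin change, then an iterative elimination by generating functions, then a final rescaling $\tilde y=\omega(y)$) is genuinely different from the paper's. The paper makes a one-shot ansatz $X(s,\vartheta)=\sum_{i\le k+1}F_{2i}(s)\vartheta^{2i}$ and \emph{defines} $Y:=X-X\circ T^{-1}$, so that in the new variables the map is automatically of the form $(X,Y)\mapsto (X+Y^{+},Y^{+})$ with the \emph{same} error in both components; the only thing to arrange is that the scalar second difference $X\circ T-2X+X\circ T^{-1}$ vanishes to order $\vartheta^{2k+4}$, which (using time reversibility to kill odd orders) reduces to explicitly solvable second-order ODEs in $s$ whose data are differential polynomials in $\rho$, and the quantitative bound is then a single Cauchy estimate on $2k+3$ derivatives of $\rho$. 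This device completely bypasses the symplectic/area bookkeeping that your scheme must carry through $2k$ conjugations.

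That bookkeeping is precisely where your proposal has genuine gaps. First, the claim that the $y$-perturbation being one order higher is ``forced by $\det DT=1$'' is wrong in these coordinates: the billiard map preserves $\sin\vartheta\, d\vartheta\wedge ds$, which in Lazutkin variables $x\propto\int\rho^{-2/3}$, $y\propto\rho^{1/3}\sin(\vartheta/2)$ becomes a constant multiple of $y\,dx\wedge dy$, not $dx\wedge dy$; the Jacobian is $y/y_1\ne 1$. The structural claim can be rescued from the weighted form (or from time reversibility, as the paper does), but your conjugations are generated by $G(x,Y)=xY+Y^{N-1}\chi(x,Y)$, i.e.\ canonical for the standard form, and you never show they preserve the weighted structure; you flag exactly this as a ``secondary subtlety'' and leave it open, yet it is what makes the induction terminate with errors of order $y^{2k+4}$ in both components. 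Second, the cohomological equation as written, $\chi(x+y,y)-\chi(x,y)=A(x,0)-\omega_N$, is inconsistent in orders of $y$: the left-hand side vanishes at $y=0$ while the right-hand side does not; the operator $\chi(x+y)-\chi(x)\approx y\,\partial_x\chi$ loses one power of $y$, and this shift (which is also where the averaged terms feed into $\omega$) must be built into the induction — as stated the key step does not match orders. Minor points: after the classical change the $x$-error is generically $O(y^3)$, not $O(y^4)$, so the induction starts at $N=3$; and the estimate section only gestures at ``Cauchy factors of order $\delta^{-1}$ per step'' without tracking how many derivatives each step costs, whereas the paper's explicit integral formulas for $F_{2i}$ make the loss $(\sigma_0/2-\sigma)^{-2k-3}$ immediate.
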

The proof is presented in the Appendix.

\begin{cor}\label{cor:appox-general}
Under the same assumptions as Proposition~\ref{prop:gen-laz}, there is $C_7 > 1$ such that, for $\sigma_1 \le \sigma_0/4$,  there is a real analytic diffeomorphism  $u_q: \T \to \T$ extensible to $\T_{\sigma_1}$, such that 
\[
	\|E(r, u_q)\|_{\sigma_1} < C_7 \sigma_0^{-7} q^{-8}. 
\]
\end{cor}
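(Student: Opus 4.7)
The plan is to apply Proposition~\ref{prop:gen-laz} with $k=2$, so that $2k+4=8$, and pick $\sigma_* = 3\sigma_0/8$ so that $(\sigma_0/2-\sigma_*)^{-7} = (\sigma_0/8)^{-7} = C\sigma_0^{-7}$. This produces Lazutkin coordinates $\Phi$ in which
\[
T_\laz = \Phi\circ T\circ \Phi^{-1}(x,y) = (x+y+y^8 f,\ y+y^8 g), \qquad \|f\|_{\sigma_*}, \|g\|_{\sigma_*} \le C\sigma_0^{-7}.
\]
Writing $\Phi^{-1}(x,y)=(\Psi_s(x,y),\Psi_\vartheta(x,y))$, the torus $\{y=\alpha\}$ is invariant under $T_\laz$ up to an $O(\sigma_0^{-7}\alpha^8)$ error, and the induced dynamics on it is $x\mapsto x+\alpha+O(\sigma_0^{-7}\alpha^8)$. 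This makes it the natural candidate for an approximate rotation-number-$1/q$ caustic.

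Next I would define
\[
	u_q(\theta) \ := \ \Psi_s(\theta,\alpha).
\]
Since the Lazutkin $x$-coordinate is normalized to have period $1$ in the arclength (so that $\Psi_s(\theta+1,y)=\Psi_s(\theta,y)+1$), this is a real analytic circle diffeomorphism. For $q$ large enough that $\alpha<\sigma_*$ and for $\sigma_1 \le \sigma_0/4 < \sigma_*$, $u_q$ extends analytically to $\T_{\sigma_1}$, and since $\alpha$ is real and $\sigma_1+\alpha<\sigma_*$, the values $u_q(\theta\pm\alpha)$ used in $E(r,u_q)(\theta)$ are also well defined and bounded on $\T_{\sigma_1}$, with Lipschitz constants controlled by $D$ and $\sigma_0$ via Proposition~\ref{prop:gen-laz}.

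The heart of the argument is bounding $\|E(r,u_q)\|_{\sigma_1}$. Let $(\tilde s_0,\tilde\vartheta_0) := T(\Phi^{-1}(\theta-\alpha,\alpha))$ and $(\tilde s_1,\tilde\vartheta_1) := T^2(\Phi^{-1}(\theta-\alpha,\alpha))$. By the explicit form of $T_\laz$ and analyticity of $\Psi_s$,
\[
|\tilde s_0 - u_q(\theta)|+|\tilde s_1 - u_q(\theta+\alpha)| \ \le \ C\sigma_0^{-7}\alpha^8.
\]
Because $(u_q(\theta-\alpha),\tilde s_0,\tilde s_1)$ is a genuine billiard segment, the discrete Euler--Lagrange identity yields $\partial_{s'}L(u_q(\theta-\alpha),\tilde s_0) + \partial_s L(\tilde s_0,\tilde s_1) = 0$. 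Subtracting this from $E(r,u_q)(\theta)$ and Taylor expanding in the second variable converts the defect into the second derivatives $\partial^2_{11}L,\partial^2_{12}L,\partial^2_{22}L$ evaluated at arguments $\alpha$ apart, multiplied by the $O(\sigma_0^{-7}\alpha^8)$ shifts. Lemma~\ref{lem:func-def} gives $\partial^2_{12}L=O(\alpha)$, and the Taylor expansion $L(s,s+h)=h(1-\tfrac{1}{24}\kappa^2 h^2+O(h^3))$ recalled in Lemma~\ref{lem:analytic-norm} gives $\partial^2_{11}L, \partial^2_{22}L = O(\alpha)$ as well. Hence $\|E(r,u_q)\|_{\sigma_1} \le C\sigma_0^{-7}\alpha^9$, which comfortably fits inside the claimed $C_7 \sigma_0^{-7} q^{-8}$.

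The main obstacle I anticipate is essentially bookkeeping rather than substance: one must verify that $\{y=\alpha\}$ indeed lies in the polydisc $\T_{\sigma_*}\times B_{\sigma_*}$ of Proposition~\ref{prop:gen-laz} (requiring $q$ larger than a constant depending on $\sigma_0$, to be absorbed into $C_7$); confirm the period-$1$ normalization of the Lazutkin coordinate from the appendix construction so that $u_q$ really descends to $\T\to\T$; and track that the restriction from the polydisc to $\T_{\sigma_1}$ does not reintroduce any $\sigma_0$-dependent loss beyond the single $(\sigma_0/8)^{-7}$ already present in the normal form bound.
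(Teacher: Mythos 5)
Your proof is correct and follows essentially the same route as the paper: both use the order-$8$ Lazutkin normal form from Proposition~\ref{prop:gen-laz} with $k=2$, define $u_q$ as the first component of $\Phi^{-1}$ restricted to the circle $\{y=\alpha\}$, invoke the exact discrete Euler--Lagrange identity along a genuine short orbit segment of $T$, and then absorb the $O(\sigma_0^{-7}\alpha^8)$ normal-form defect via a Lipschitz/Taylor argument. The only cosmetic difference is that the paper centers the comparison orbit at $(u(\theta),w(\theta))$ and uses $T$ and $T^{-1}$, whereas you start from $\Phi^{-1}(\theta-\alpha,\alpha)$ and apply $T$ and $T^2$; your additional observation that the second partials $\partial^2_{11}L,\partial^2_{12}L,\partial^2_{22}L$ at separation $O(\alpha)$ are themselves $O(\alpha)$ gives the slightly sharper $O(\sigma_0^{-7}\alpha^9)$, which of course still lands inside the stated bound.
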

\begin{proof}
Denote $\alpha = 1/q$ as before, and let $\Phi = \Phi_8$ be the Lazutkin normal form of order $8$ from Proposition~\ref{prop:gen-laz}. 
Let $\gamma(\theta) = (\theta, \frac{1}{q}) \in \T \times (0, \sigma_0/2)$. For 
\[
	\eta(\theta) = (u(\theta), w(\theta)) = \Phi^{-1}(\gamma(\theta)),  
\]
we have $\|T(\eta(\theta)) - \eta(\theta+\alpha)\|_{\sigma_1} \le C \sigma_0^{-7} q^{-8}$. 

Let $\olu(\theta) = \pi_1 T(u(\theta), w(\theta))$, and $\ulu(\theta) = \pi_1 T^{-1}(u(\theta), w(\theta))$, since $\olu(\theta), u(\theta), \ulu(\theta)$ is the projection of a billiard orbit, we have
\[
	\partial_{s'} L(r; \ulu(\theta), u(\theta)) + \partial_s L(r, u(\theta), \ulu(\theta)) = \cos w(\theta) - \cos w(\theta) = 0.  
\]
Moreover,
\[
	\|\olu(x) - u(x + \alpha)\|_{\sigma_1}, \|\ulu(x) - u(x -\alpha)\|_{\sigma_1} \le C \sigma_0^{-7}q^{-8}, 
\]
we conclude that  
\[
	\|E(r, u)\|_{\sigma_1} \le C \sigma_0^{-7} q^{-8}. 
\]
\end{proof}

Apply Corollary~\ref{cor:appox-general} and then Proposition~\ref{prop:nek}, we obtain:
\begin{cor}
[Analytic case] \label{cor:app-analytic}
Suppose $r$ satisfies the conditions \eqref{eq:real-assumption} and \eqref{eq:std-assumption}, then there is $C_8 > 1$ depending only on $\sigma_0$ and $D$ such that for $q^{-1} < C_8^{-1}$ and $\sigma_1 = \sigma_0/8$, there exists $u_\app: \T \to \T$ extensible to $\T_{\sigma_1}$ such that
\[
	\|E(r \circ u_\app, \id)\|_{\sigma_1} <  C_8 q^{-8} e^{- \sigma q / 8 }. 
\] 
\end{cor}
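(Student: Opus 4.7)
The plan is to simply compose the two reductions already developed in the paper. Corollary~\ref{cor:appox-general} produces a first reparametrization $u_q$ that brings the error $E$ down to size $O(q^{-8})$, which is polynomially small. Proposition~\ref{prop:nek} then takes any boundary with polynomially small $E(\cdot, \id)$ and, via its Nekhoroshev-type iteration, produces a further reparametrization that pushes $E(\cdot, \id)$ down to exponentially small size. Setting $u_\app$ to be the composition of the two reparametrizations gives the desired approximate caustic.

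Concretely, I would fix $\sigma_1 = \sigma_0/4$ and apply Corollary~\ref{cor:appox-general} to obtain a real analytic diffeomorphism $u_q: \T \to \T$, extensible to $\T_{\sigma_0/4}$, with $\|E(r, u_q)\|_{\sigma_0/4} \le C\sigma_0^{-7} q^{-8}$. Define $r_1 := r \circ u_q$. The identity $E(r \circ u, \id) = u_\theta E(r, u)$ mentioned in the introduction gives
\[
 \|E(r_1, \id)\|_{\sigma_0/4} \le \|u_{q,\theta}\|_{\sigma_0/4}\, \|E(r, u_q)\|_{\sigma_0/4} \le C'\sigma_0^{-7} q^{-8} =: \epsilon,
\]
where the bound on $u_{q,\theta}$ comes from the norm estimates in Proposition~\ref{prop:gen-laz}. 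Since $r_1$ is only a reparametrization of the same convex curve, it still satisfies \eqref{eq:real-assumption} and \eqref{eq:std-assumption} (possibly with constants enlarged in a way depending only on $D$ and $\sigma_0$); I take the reference boundary $r_0$ in Proposition~\ref{prop:nek} to be $r_1$ itself, so that the closeness hypothesis $\|r_1 - r_0\|_{\sigma_0, 2} < (2C_3)^{-1}$ holds trivially. For $q$ large enough, $\epsilon < C_4^{-1} q^{-7}$, so Proposition~\ref{prop:nek} applies with $\sigma = \sigma_0/4$ and produces $u_\nek$, extensible to $\T_{\sigma_0/8}$, such that $r_1 \circ u_\nek$ satisfies
\[
 \|E(r_1 \circ u_\nek, \id)\|_{\sigma_0/8} < D_4\, \epsilon\, e^{-q\sigma_0/8} \le C_8\, q^{-8}\, e^{-\sigma_0 q/8}.
\]
Setting $u_\app := u_q \circ u_\nek$ yields $r_1 \circ u_\nek = r \circ u_\app$, which is the required boundary.

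There is no conceptual obstacle here, only bookkeeping: the one thing worth checking carefully is that $r_1 = r \circ u_q$ satisfies the hypotheses of Proposition~\ref{prop:nek} with constants depending only on $D$ and $\sigma_0$. This amounts to transferring the bounds on $u_q$ provided by Proposition~\ref{prop:gen-laz} through the composition, and verifying the smallness conditions $q^{-1} < C_4^{-1}$ and $\epsilon < C_4^{-1} q^{-7}$ are satisfied once $q^{-1} < C_8^{-1}$ is taken small enough. All constants arising on the way — from $C_7$, the bounds on $u_q$, and $D_4$ — are absorbed into a single $C_8$ depending only on $\sigma_0$ and $D$.
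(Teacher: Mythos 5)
Your proposal is correct and follows essentially the same route as the paper's (very terse) proof: apply Corollary~\ref{cor:appox-general} with width $\sigma_0/4$ to produce $u_q$ with $\|E(r,u_q)\|_{\sigma_0/4}\le C\sigma_0^{-7}q^{-8}$, feed $r\circ u_q$ into Proposition~\ref{prop:nek} with $\sigma=\sigma_0/4$, and set $u_\app=u_q\circ u_\nek$. Two details you make explicit that the paper's three-line proof leaves implicit are worth keeping: (i) the conversion from $\|E(r,u_q)\|$ to $\|E(r\circ u_q,\id)\|$ via the identity $E(r\circ u,\id)=u_\theta E(r,u)$, and (ii) that $r\circ u_q$ must be taken as the new reference boundary for Proposition~\ref{prop:nek}, since $u_q$ is built from the Lazutkin change of variables and is not a small perturbation of the identity, so $\|r\circ u_q - r\|$ is not small and (A1)/(A2) must be re-verified for $r\circ u_q$ with enlarged constants. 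One bookkeeping point that you (and the paper) gloss over: when you relabel $r\circ u_q$ as the reference $r_0$ of Section~4, its analytic width is at most $\sigma_0/4$, so the hypothesis $\sigma<\sigma_0/4$ of Proposition~\ref{prop:nek} nominally becomes $\sigma<\sigma_0/16$, and taking $\sigma=\sigma_0/4$ is at the edge of what that proposition literally allows; this costs at worst a fixed constant in the exponent and doesn't affect the qualitative statement, but if you wanted a fully tight constant you would have to track the width loss through Proposition~\ref{prop:gen-laz} and Corollary~\ref{cor:appox-general} more carefully.
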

\begin{proof}
We first apply Corollary~\ref{cor:app-analytic}, choosing the width $\sigma_0/4$. Then Proposition~\ref{prop:nek} applies with $\epsilon = C \sigma_0^{-7} q^{-8}$ and parameter $\sigma_0/4$. We obtain $u_\nek: \T_{\sigma_0/8} \to \T_{\sigma_0/4}$, such that $\|E(r \circ u_q \circ u_\nek, \id)\|_{\sigma_1} < C \epsilon e^{- \sigma_0 q/ 8}$ 
\end{proof}

In the case $r$ is smooth, we use a standard analytic approximation. 
\begin{prop}
[See \cite{Zeh1975}, Lemma 2.1, 2.3]
Suppose $f: \R^n \to \R$ is $C^m$, and let $l < m$. Then for each $t> 0$ there exists an analytic function $S_t f$ satisfying the following estimates:
\[
	\|S_t f\|_{t^{-1}} \le C_{n, m} \|f\|_{C^m}, \quad
	\|S_t f - f\|_{C^l} \le C_{n, m, l} t^{-(m - l)} \|f\|_{C^m}. 
\]
\end{prop}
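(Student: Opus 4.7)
The plan is to construct $S_t f$ by convolution with a rescaled, band-limited analytic kernel, and then verify the two estimates via Paley--Wiener bounds and a Taylor-expansion argument that exploits vanishing moments.

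First I would fix once and for all a Schwartz function $K : \R^n \to \R$ whose Fourier transform $\hat K$ is smooth, real-valued, compactly supported in the unit ball, and identically equal to $1$ in a neighbourhood of the origin. By Paley--Wiener, $K$ extends to an entire function on $\C^n$ with
\[
	|K(x + iy)| \le C_N (1 + |x|)^{-N} e^{|y|}, \qquad x, y \in \R^n,\ N \in \N,
\]
and since $\hat K \equiv 1$ near $0$, differentiating the Fourier transform at the origin shows that $\int u^\alpha K(u)\,du = 0$ for every $1 \le |\alpha| \le m - 1$; equivalently, convolution against the rescaled kernel $K_\eps(x) := \eps^{-n} K(x/\eps)$ reproduces polynomials of degree $<m$. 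I then define $S_t f := f \ast K_{t^{-1}}$.

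For the analytic norm estimate I would take $z \in \C^n$ with $|\Im z| \le t^{-1}$ and bound
\[
	|(S_t f)(z)| \le \int |K_{t^{-1}}(z-y)|\,|f(y)|\,dy \le C_N \|f\|_{C^0} \int t^n (1 + t|\Re(z - y)|)^{-N} e^{t|\Im z|}\, dy,
\]
using the scaled Paley--Wiener bound. Since $|\Im z| \le t^{-1}$ the factor $e^{t|\Im z|}$ is bounded, and the remaining integral is $O(1)$ after substituting $y = \Re z - t^{-1}u$, giving $\|S_t f\|_{t^{-1}} \le C_{n,m}\|f\|_{C^0} \le C_{n,m}\|f\|_{C^m}$. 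For the approximation estimate, the vanishing-moment property lets me subtract the degree-$(m-1)$ Taylor polynomial $T_x^{m-1} f$ of $f$ at $x$,
\[
	(S_t f - f)(x) = \int K_{t^{-1}}(x - y)\bigl[\, f(y) - T_x^{m-1} f(y) \,\bigr] dy,
\]
and bound the remainder pointwise by $C \|f\|_{C^m} |x - y|^m$; substituting $y = x - t^{-1}u$ then yields $\|S_t f - f\|_{C^0} \le C\, t^{-m} \|f\|_{C^m}$. Since $\partial^\alpha(S_t f) = S_t(\partial^\alpha f)$ for $|\alpha| \le l$, repeating the same $C^0$ argument for $\partial^\alpha f \in C^{m - l}$ gives $\|S_t f - f\|_{C^l} \le C_{n,m,l}\, t^{-(m-l)} \|f\|_{C^m}$.

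I expect the main subtlety to be the exact calibration between the analytic strip width and the exponential type of $K$: Paley--Wiener produces growth $e^{|y|}$ in the imaginary direction, and the rescaling $K(tz)$ turns this into $e^{t |\Im z|}$, which is harmless only on the strip $|\Im z| \le t^{-1}$; this forced coupling is precisely what fixes the analytic width equal to $t^{-1}$ in the statement. Everything else is standard convolution analysis with careful tracking of constants, which depend only on $n, m$ (and, in the approximation bound, additionally on $l$) through the fixed kernel $K$.
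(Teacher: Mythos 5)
Your construction — convolution with a band-limited Schwartz kernel $K$ whose Fourier transform is $1$ near the origin, then Paley--Wiener for the strip estimate and vanishing moments together with Taylor remainders for the $C^l$ approximation bound — is precisely the Jackson--Moser--Zehnder smoothing argument that the cited reference \cite{Zeh1975} uses, and the paper supplies no proof of its own beyond that citation. Your proposal is correct and essentially reproduces the standard source's proof.
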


For each $r \in C^m$, first assume it is in the arclength parameter. We consider the curvature function $\kappa(s) = |\ddot{r}|$ and the analytic approximation $S_t \dot\kappa$ of $\dot\kappa$,  such that
\[
	\|S_t \dot\kappa\|_{t^{-1}} \le C_m \|\dot{\kappa}\|_{C^{m-3}}, \quad \|S_t \dot\kappa - \dot\kappa\|_{C^{l-3}} \le C_{m, l} t^{-(m - l)} \|\dot\kappa\|_{C^{m-3}}. 
\]
Let us note the curvature function $\kappa$ determines a billiard boundary if and only if (see \cite{MM1982}, Proposition 2.7) 
\[
\kappa > 0, \quad \int_0^1 \kappa(s) cos(2\pi s) ds = \int_0^1 \kappa(s) \sin(2\pi s) ds = 0.
\]
Consider $\kappa_1(s) = \kappa(0) + \int_0^s \left( S_t\dot{\kappa}(\tau) - \int_0^1 S_t \dot{\kappa}(\sigma)d\sigma \right) d\tau$, and $\kappa_\omega$ be the projection in Fourier space of  $\kappa_1$ by removing the modes $\cos(2\pi s)$ and $\sin(2\pi s)$. Then there exists a billiard boundary $r_\omega$ corresponding to the curvature $\kappa_\omega$. 

We will pick $t = q^{\frac17}$, $\sigma_q = q^{-\frac17}$. Then 
\[
	\|r_\omega\|_{\sigma_q, 3} \le C_m \|r\|_{C^{m}}, \quad \|r_\omega - r\|_{C^{l+3}} \le C_{m, l} q^{-\frac{m-l}{7}} \|r\|_{C^{m}}. 
\]
We apply Corollary~\ref{cor:appox-general} to $r_\omega$, and obtain an approximate solution $u_q$ such that $\|E(r_\omega, u_q)\|_{\sigma_q/2} < C \sigma_q^{-7} q^{-8} < C q^{-7}$, which satisfies the condition of Proposition~\ref{prop:nek}. After application of the proposition, we obtain: 
\begin{cor} 
[Smooth case] \label{cor:app-smooth}
Suppose $r \in C^m$ satisfies \eqref{eq:real-assumption} and $\|r\|_{C^m} < D$. Then there is $C_9> 1$ depending only on $D$ and $m, l$ such that, for 
\[
	q^{-1} < C_9^{-1}, \quad \sigma_q = q^{-\frac17}, 
\]
there is $r_\omega \in \cA_{\sigma_q, 2}$ and $u_\app \in \cA_q$ satisfying $\|r_\omega\|_{\sigma_q, 3} \le C_9$, and 
\[
	\|r_\omega - r\|_{C^l} \le C_9 q^{-\frac{m-l}{7}}, \quad
	\|E(r_\omega \circ u_\app, \id)\|_{\sigma_q/4} \le C_9 e^{-  C_9^{-1} q^{\frac78}}. 
\]
\end{cor}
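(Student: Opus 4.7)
The plan is to combine the analytic smoothing recipe stated just above the corollary with the two ingredients already established for analytic boundaries: Corollary~\ref{cor:appox-general} (Lazutkin-type approximate caustic) and Proposition~\ref{prop:nek} (Nekhoroshev-type iteration). Schematically, I would replace $r$ by an analytic approximation $r_\omega$ whose analytic width shrinks polynomially in $q$, run the Lazutkin step on $r_\omega$ to obtain a polynomial-error approximate solution, and then run the Nekhoroshev iteration to upgrade the error to a stretched-exponential bound.

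First I would pass to arclength parametrisation, so that $\kappa=|\ddot r|$ is well defined and $\dot\kappa\in C^{m-3}$ with norm controlled by $\|r\|_{C^m}$. Apply Zehnder's lemma to $\dot\kappa$ at bandwidth $t=q^{1/7}$, producing an analytic $S_t\dot\kappa$ bounded on $\T_{\sigma_q}$ with $\sigma_q=t^{-1}=q^{-1/7}$ and approximating $\dot\kappa$ in $C^{l-3}$ to accuracy $q^{-(m-l)/7}$. Integrate after subtracting the mean (to restore periodicity), then project out the Fourier modes $\cos(2\pi s)$ and $\sin(2\pi s)$ required by the closing conditions of \cite{MM1982}. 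Since these projections are continuous on both the analytic and $C^l$ norms, and the corresponding Fourier coefficients of the true $\dot\kappa$ vanish, the projection moves the integrated curvature by at most the smoothing error itself. The resulting $\kappa_\omega$ integrates to an analytic boundary $r_\omega$ satisfying $\|r_\omega\|_{\sigma_q,3}\le C$ and $\|r_\omega-r\|_{C^l}\le C\,q^{-(m-l)/7}$, and, for $q$ large, obeying \eqref{eq:real-assumption} and \eqref{eq:std-assumption} with constants close to those of $r$.

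Next I would feed $r_\omega$ into Corollary~\ref{cor:appox-general} at width $\sigma_q/2$. Since $\sigma_q^{-7}=q$, the approximate solution $u_q$ satisfies
\[
\|E(r_\omega,u_q)\|_{\sigma_q/2}\le C\sigma_q^{-7}q^{-8}=Cq^{-7}.
\]
Now apply Proposition~\ref{prop:nek} to $r_\omega\circ u_q$, using the identity $E(r_\omega\circ u_q,\id)=u_{q,\theta}\,E(r_\omega,u_q)$ to interpret the $u_q$-error as an $\id$-error. The smallness hypothesis $\epsilon<C_4^{-1}q^{-7}$ of the proposition holds once $q$ is large enough, and its conclusion gives a diffeomorphism $u_\nek$; setting $u_\app:=u_q\circ u_\nek$ yields
\[
\|E(r_\omega\circ u_\app,\id)\|_{\sigma_q/4}\le D_4\,q^{-7}\exp\bigl(-q\sigma_q/4\bigr),
\]
which, in view of $q\sigma_q=q^{6/7}$, produces the claimed stretched-exponential decay (up to absorbing the polynomial prefactor and adjusting the exponent constant $C_9^{-1}$).

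The main obstacle I anticipate is Step~2: verifying that projecting the smoothed curvature onto the codimension-three affine subspace of admissible curvatures preserves \emph{both} the analytic bound at width $\sigma_q$ and the $C^l$ closeness to $r$, and does not destroy convexity ($\kappa_\omega>0$) for $q$ large. Once that bookkeeping is settled, Steps~3--4 are essentially mechanical: one only needs to check that the choice $\sigma_q=q^{-1/7}$ correctly balances the polynomial loss $\sigma_q^{-7}=q$ in Corollary~\ref{cor:appox-general} against the exponential gain $e^{-q\sigma_q}$ in Proposition~\ref{prop:nek}, and to absorb the various $D$-, $m$-, and $l$-dependent constants into $C_9$.
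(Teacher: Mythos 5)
Your proposal matches the paper's own argument essentially step by step: arclength parametrisation, Zehnder smoothing of $\dot\kappa$ at bandwidth $t=q^{1/7}$, integration and Fourier-mode projection to land on an admissible curvature, then Corollary~\ref{cor:appox-general} at width $O(\sigma_q)$ giving an $O(q^{-7})$ error, and finally Proposition~\ref{prop:nek}. The paper skips the bookkeeping you flag as the ``main obstacle'' (it simply asserts the bounds on $r_\omega$), so your extra care there is a reasonable expansion of what the authors leave implicit.

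One small quantitative point: you correctly observe that $q\sigma_q = q^{6/7}$, so the Nekhoroshev step yields a bound of the form $C\,q^{-7}\,e^{-c\,q^{6/7}}$. This cannot be absorbed into $C_9 e^{-C_9^{-1}q^{7/8}}$ as the corollary states, because adjusting $C_9$ changes the constant in the exponent but not the power $6/7$ versus $7/8$. The discrepancy is almost certainly a typo in the paper (the choice $t=q^{1/7}$, $\sigma_q=q^{-1/7}$ forces $6/7$, consistent with the $q^{-(m-l)/7}$ decay appearing elsewhere), so your parenthetical claim that the prefactor and constant can be ``adjusted'' should be sharpened to note that the correct exponent is $q^{6/7}$, not $q^{7/8}$.
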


\section{KAM algorithm and proof of the main theorem}
\label{sec:kam}

We prove the following KAM-type theorem. 
\begin{thm}\label{thm:analytic-KAM}
Suppose $0 < \sigma < \sigma_0/2$, and $r_0$ satisfies our standing assumptions \eqref{eq:real-assumption} and  \eqref{eq:std-assumption}. There exists constants $C_{10}, D_{10} > 1$ depending only on $D$ such that if 
\[
	q^{-1} < C_{10}^{-1}, \quad  \|E(r, \id)\|_\sigma :=  \epsilon < C_{10}^{-1} q^{-8}, \quad  \epsilon <  C_{10}^{-1} q^{-5} \sigma^4, 
\] 
 there is $r_\infty \in \cA_{\sigma/2}$ such that 
\[
	E(r_\infty, \id) = 0, \quad \|r_\infty - r\|_{\sigma/2, 3} < D_{10} q^3\epsilon/\sigma^3. 
\]
\end{thm}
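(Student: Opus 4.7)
The plan is to run a quadratically convergent Newton-type iteration that alternates the two building blocks already developed: the projection of Corollary~\ref{cor:proj}, which kills the resonant part of the error, and the Moser-Levi inversion of Corollary~\ref{cor:inverse-bound}, which handles the non-resonant part. Starting from $r^{(0)} := r$ on $\T_\sigma$, I construct a sequence $r^{(n)} \in \cB_{\sigma_n}$ on shrinking domains $\T_{\sigma_n}$ with $\sigma_n \searrow \sigma/2$, and show that $\epsilon_n := \|E(r^{(n)}, \id)\|_{\sigma_n}$ decays super-exponentially.

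A single iteration step looks as follows. First, apply Corollary~\ref{cor:proj} at $u = \id$ to obtain $a_n$ satisfying $a_n(\theta + \alpha) = a_n(\theta)$ such that $\tilde r^{(n)} := e^{a_n} r^{(n)}$ has $[E(\tilde r^{(n)}, \id)]_q = 0$. Second, invoke Corollary~\ref{cor:inverse-bound} to solve
\[
  \nabla^-\bigl( L_{12}(\tilde r^{(n)}, \id)\, \nabla v_n \bigr) = -E(\tilde r^{(n)}, \id),
\]
with $\|v_n\|_{\sigma_n} \lesssim q^3 \epsilon_n$. Finally set $r^{(n+1)} := \tilde r^{(n)} \circ (\id + v_n)$ on $\T_{\sigma_{n+1}}$, where $\sigma_{n+1} = \sigma_n - \delta_n$ and $\delta_n$ is a small loss of analyticity to be chosen.

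The new error is controlled by combining the composition identity $E(r \circ (\id + v), \id) = (1 + v_\theta) E(r, \id + v)$ with the Moser-Levi identity \eqref{eq:moser-levi} at $u = \id$, which reads $\partial_u E(r, \id)\cdot v + v\,\tfrac{d}{d\theta} E(r, \id) = \nabla^-(L_{12}\nabla v)$. Taylor expansion in $v$ and substitution of our chosen $v_n$ collapses the first-order terms and yields
\[
  E(r^{(n+1)}, \id) = -(1 + (v_n)_\theta)\,v_n\,\tfrac{d}{d\theta} E(\tilde r^{(n)}, \id) + O\bigl(\|v_n\|^2\bigr).
\]
A Cauchy estimate $\|\tfrac{d}{d\theta} E\|_{\sigma_n - \delta_n} \lesssim \epsilon_n/\delta_n$, together with the $\partial^3 L$ bound of Lemma~\ref{lem:analytic-norm}(3) applied to the quadratic remainder, gives the recursion
\[
  \epsilon_{n+1} \lesssim \Bigl(\frac{q^3}{\delta_n} + q^6\Bigr)\epsilon_n^2.
\]
Choosing, e.g., $\delta_n = \sigma/(c n^2)$ with $c$ a large absolute constant keeps $\sum \delta_n < \sigma/2$, and under the smallness conditions $\epsilon < C_{10}^{-1} q^{-8}$ and $\epsilon < C_{10}^{-1} q^{-5} \sigma^4$ produces $\epsilon_{n+1} \le \epsilon_n^{3/2}$, hence super-exponential convergence. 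The displacement per step is bounded by $\|r^{(n+1)} - r^{(n)}\|_{\sigma_{n+1}, 3} \lesssim q^3 \epsilon_n / \delta_n^3$ after three Cauchy derivatives, so the telescoping sum converges and delivers $\|r_\infty - r\|_{\sigma/2, 3} \lesssim q^3 \epsilon/\sigma^3$, with $r_\infty := \lim_n r^{(n)}$ satisfying $E(r_\infty, \id) = 0$ by continuity.

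The main obstacle is verifying that the inserted projection step does not spoil the quadratic estimate or the $\cB_{\sigma_n}$ inclusion. Writing $\dot a_n = -[E(r^{(n)}, \id)]_q / [F(r^{(n)}, \id)]_q$ and using Lemma~\ref{lem:F-const} together with $|\Delta r|\sim 1/q$ gives $[F(r^{(n)}, \id)]_q \gtrsim 1/q$, and hence $\|a_n\|_{\sigma_n} \lesssim q^2 \epsilon_n$. This is \emph{smaller} than $\|v_n\|_{\sigma_n} \sim q^3\epsilon_n$, so $\tilde r^{(n)}$ stays in $\cB_{\sigma_n}$ and the Moser-Levi bounds apply with constants uniform in $n$; moreover the residual resonant error introduced by passing from $r^{(n)}$ to $\tilde r^{(n)}$ is itself quadratic in $\epsilon_n$ by the same Taylor argument. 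The other delicate piece of bookkeeping is the simultaneous control of the $\|\cdot\|_{\sigma_n, 3}$ norm with its $\delta_n^{-3}$ Cauchy losses, from which the two smallness conditions in the statement inherit the precise exponents $q^{-8}$ and $q^{-5}\sigma^4$.
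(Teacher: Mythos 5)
Your scheme is essentially the paper's own proof: one step consists of the resonant projection (Corollary~\ref{cor:proj} at $u=\id$, i.e.\ $r_* = e^{a}r$ with $\dot a = -[E]_q/[F]_q$), followed by the Moser--Levi inversion of Corollary~\ref{cor:inverse-bound} on the now non-resonant error, composition $r_+ = r_*\circ(\id+v)$ via the identity $E(r\circ u,\id)=u_\theta E(r,u)$, and the quadratic estimate $\|E(r_+,\id)\|_{\sigma'} \lesssim (q^3/(\sigma-\sigma')+q^6)\,\epsilon^2$ obtained from \eqref{eq:moser-levi} plus the $\partial^3L$ bound; this is exactly Proposition~\ref{prop:iter} and the subsequent induction, with only cosmetic differences ($\delta_n=\sigma/(cn^2)$ instead of $2^{-n-1}\sigma$; your bound $\|a_n\|\lesssim q^2\epsilon_n$ is weaker than the paper's $\|a\|\lesssim\epsilon$, which uses the $\alpha$-periodicity of $a$, but it is still harmless).

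One point of bookkeeping is actually wrong as stated: the claim that the hypotheses $\epsilon<C_{10}^{-1}q^{-8}$, $\epsilon<C_{10}^{-1}q^{-5}\sigma^4$ yield $\epsilon_{n+1}\le\epsilon_n^{3/2}$. That inequality at the first step requires $K(q^3/\delta_1+q^6)\,\epsilon^{1/2}\le 1$, i.e.\ roughly $\epsilon\lesssim\min(q^{-12},\sigma^2q^{-6})$, which does not follow from $\epsilon\lesssim q^{-8}$ (take e.g.\ $\epsilon=q^{-10}$: then $q^6\epsilon^{1/2}=q\gg1$). This is precisely why the paper runs the induction with the exponent $5/4$, for which the requirement becomes $q^6\epsilon^{3/4}\lesssim1$, i.e.\ $\epsilon\lesssim q^{-8}$, matching the hypothesis. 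Your argument is repaired without changing anything else either by adopting the $5/4$ exponent, or by tracking the contraction factor $\lambda_n := K(q^3/\delta_n+q^6)\epsilon_n$ directly: with your choice of $\delta_n$ one has $\lambda_{n+1}\le 4\lambda_n^2$ and $\lambda_1<1$ under the stated smallness conditions, which gives the super-exponential decay of $\epsilon_n$, and the telescoping estimate $\sum_n q^3\epsilon_n/\delta_n^3\lesssim q^3\epsilon/\sigma^3$ then goes through as you wrote.
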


We now describe the iteration process. 
\begin{prop}
	[Iterative lemma for KAM] \label{prop:iter} Suppose $\|r - r_0\|_{\sigma, 2} < (2C_3)^{-1}$ ($C_3$ is from \eqref{eq:B-sigma}), define 
	\begin{equation}
		\label{eq:a-iter} 
		a(\theta) = - \int_0^\theta \frac{[E(r, \id)]_q}{[F(r, \id)]_q} (\tau) d\tau, \quad r_* = e^a r,
	\end{equation}
	and let $v$ solve
	\begin{equation}
		\label{eq:v}
		\nabla^-(L_{12}(r_*, \id) \nabla v) = - E(r_*, \id), 
	\end{equation}
	then there exists constants $C_4, D_4 > 1$ depending only on $D$ such that: if for $\epsilon > 0$ and $0 < \sigma' < \sigma$,  
	\[
	 \|E(r, \id)\|_\sigma < \epsilon, \quad C_4 q^3 \epsilon < \sigma - \sigma', \quad
	 q^{-1} < \min\{C_4^{-1}, C_1\sigma_0\}, 
	\]
	where the constants $C_1, C_4$ are from Lemma~\ref{lem:nek-iter}. 
	we have $r_+ = r_* \circ (\id + v) \in \cA_{\sigma', 3}$, and 
	\[
		\|r_+ - r\|_{\sigma', 2} < \frac{D_4 q^3\epsilon}{(\sigma - \sigma')^3} , \quad
		\|v\|_{\sigma'} < D_4  q^3 \epsilon, \quad
		\|E(r_+, \id)\|_{\sigma'} \le  \left( D \frac{q^3}{\sigma - \sigma'} + q^6 \right) \epsilon^2. 
	\]
\end{prop}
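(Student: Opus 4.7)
The plan is to execute a Newton step combining the gauge transformation from Corollary~\ref{cor:proj} with a Moser--Levi linearization. The gauge $a$ is chosen so that $r_* = e^a r$ has vanishing resonant error, $[E(r_*, \id)]_q = 0$, which is precisely the solvability condition for the equation~\eqref{eq:v} defining $v$. The composite $r_+ = r_* \circ (\id + v)$ will then have an error quadratic in $\epsilon$, thanks to the Moser--Levi identity \eqref{eq:moser-levi}.

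First I will estimate the gauge. By Lemma~\ref{lem:func-def} the denominator $[F(r,\id)]_q = [|\Delta r|]_q$ is comparable to $\alpha = 1/q$, so $\|\dot a\|_\sigma \le C q \epsilon$. The identity $\frac{d}{d\theta}[F]_q = [E(r,\id)]_q$ forces $a$ to be $\alpha$-periodic, and reducing $\Re\theta$ to $[0,\alpha]$ before integrating $\dot a$ yields $\|a\|_\sigma \le C\epsilon$, whence $\|r_* - r\|_{\sigma, 3} \le C\epsilon$. In particular $r_*$ remains in $\cB_\sigma$ and Corollary~\ref{cor:proj} gives $[E(r_*, \id)]_q = 0$. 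Since $\|E(r_*, \id)\|_\sigma \le 2\epsilon$ and this error is purely non-resonant, Corollary~\ref{cor:inverse-bound} produces a unique $v$ solving \eqref{eq:v} with $\|v\|_\sigma \le D_2 q^3 \epsilon$. The smallness hypothesis $C_4 q^3 \epsilon < \sigma - \sigma'$ ensures $\id + v$ maps $\T_{\sigma'}$ into $\T_\sigma$, and Cauchy estimates applied to $r_* \circ (\id + v) - r_*$ together with the bound on $r_* - r$ give the claimed $\|r_+ - r\|_{\sigma', 3}$ estimate.

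The main step, and the source of the quadratic improvement, is the bound on $E(r_+, \id)$. I will exploit the covariance identity $E(r \circ u, \id) = u_\theta E(r, u)$ recalled in the introduction to rewrite
\[
E(r_+, \id) = (1 + v_\theta)\, E(r_*, \id + v),
\]
and Taylor expand the second factor at $v = 0$:
\[
E(r_*, \id + v) = E(r_*, \id) + \partial_u E(r_*, \id) \cdot v + R,
\]
with $\|R\|_{\sigma'} \le C \|\partial^3 L\|_\sigma \|v\|_\sigma^2 \le C q^6 \epsilon^2$ by Lemma~\ref{lem:analytic-norm}\,(3). Specializing the Moser--Levi identity \eqref{eq:moser-levi} to $u = \id$ (so $u_\theta = 1$ and $w = v$) and substituting the defining equation \eqref{eq:v} for $v$ gives
\[
\partial_u E(r_*, \id) \cdot v = -E(r_*, \id) - v\, \tfrac{d}{d\theta} E(r_*, \id),
\]
so the $E(r_*, \id)$ contribution cancels exactly in the Taylor expansion, leaving $E(r_*, \id + v) = -v\, \frac{d}{d\theta} E(r_*, \id) + R$. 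A Cauchy estimate $\|\frac{d}{d\theta} E(r_*, \id)\|_{\sigma'} \le C \epsilon/(\sigma - \sigma')$, multiplied by the factor $1 + v_\theta$ which is harmlessly close to $1$, then produces the claimed bound $\|E(r_+, \id)\|_{\sigma'} \le (D q^3/(\sigma - \sigma') + q^6) \epsilon^2$. The delicate point is that Steps~1 and~2 dovetail: the projection is necessary for \eqref{eq:v} to be solvable in the first place, and is cheap enough (only $O(\epsilon)$ in size) that it does not dominate the final composition estimate.
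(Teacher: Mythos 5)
Your proof is correct and follows essentially the paper's own route: estimate the gauge $a$ (using $[F(r,\id)]_q\sim q^{-1}$ and the $\alpha$-periodicity coming from $\tfrac{d}{d\theta}[F]_q=[E]_q$) so that $[E(r_*,\id)]_q=0$ by Corollary~\ref{cor:proj}, then carry out the Moser--Levi Newton step; the paper simply delegates that second step to Lemma~\ref{lem:nek-iter} applied to $r_*$ (for which $\{E(r_*,\id)\}_q=E(r_*,\id)$), while you re-derive the same quadratic estimate inline, being if anything slightly more explicit about the covariance factor $(1+v_\theta)$. The only cosmetic caveat is that $\|r_*-r\|_{\sigma,3}\le C\epsilon$ should carry a Cauchy-type loss in $(\sigma-\sigma')$ for the higher derivatives of $a$, exactly as in the paper's own bound $\|r_*-r\|_{\sigma',3}\le C\epsilon/(\sigma-\sigma')^3$, which is harmless for the stated conclusions.
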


\begin{proof}
	Note that for small $\sigma$, Lemma~\ref{lem:analytic-norm} implies $C^{-1}q^{-1} \le \||\Delta r|\|_\sigma \le C q^{-1}$, and therefore by Lemma~\ref{lem:F-const}, 
	\[
		C^{-1} q^{-1} \le \|[F(r, \id)]_q\|_\sigma = \|[|\Delta r|]_q\|_\sigma \le C q^{-1}, 
	\]
	then noting $a$ is $\alpha = q^{-1}$ periodic, 
	\[
		\|\dot{a}\|_{\sigma} \le \frac{\|E\|_\sigma}{\min \|F\|} \le C q \epsilon, \quad
		\|a\|_\sigma \le  \max_{0 \le t \le \alpha, \,  \theta \in \T_\sigma} \left|\int_0^t \dot{a}(\theta + \tau) d\tau \right| \le C \alpha q \epsilon = C \epsilon. 
	\]
	As a result
	\[
		\|r_* - r\|_{\sigma', 3} \le C \|a\|_{\sigma,3} \le \frac{C\epsilon}{(\sigma - \sigma')^3}. 
	\]
	Moreover, 
	\[
		\|E(r_*, \id)\|_{\sigma} \le \|e^a\|_\sigma (\|\dot{a}\|_\sigma \|F(r, u)\|_\sigma + \|E(r, u)\|_\sigma) \le  C q\epsilon \cdot  \alpha + C\epsilon \le C \epsilon. 
	\]
	We now apply Lemma~\ref{lem:nek-iter} to $r_*$, to obtain the estimate for $r_+$. Note that the bound $\|r_+ - r_*\|$ dominates the bound $\|r_* - r\|$, therefore the final error estimates are of the same order as in Lemma~\ref{lem:nek-iter}. 
\end{proof}

To apply KAM we have the following standard induction lemma: 
\begin{prop}
Suppose $0 < \sigma < \sigma_0/2$, and $\|r - r_0\| < (4D)^{-1}$. Denote, 
\[
	\sigma_1 = \sigma, \quad \sigma_{n+1} = \sigma_n - \delta_n, \quad \delta_n = 2^{-n-1}\sigma, 
\]
\[
	 \epsilon_1 = \epsilon, \quad \epsilon_{n+1} = (\epsilon_n)^{\frac54}, \quad n \ge 1. 
\]
There exist constants $C_{10}, D_{10} > 1$ depending only on $D$, such that if:
\[
 C_{10} \epsilon < q^{-8}, \quad C_{10} \epsilon < q^{-4} \sigma^4, 
\]
the following hold for all $n \ge 1$:
\begin{enumerate}
 \item $C_{10} \epsilon_n < q^{-8}$, $C_{10} \epsilon_n < q^{-4} (\delta_n)^4$. 
 \item For $r_1 = r$ and $r_{n+1} = (r_n)_+$ using Proposition~\ref{prop:iter}, we have 
 	\[
		\|r_n- r_{n-1}\|_{\sigma_n, 3} < D_{10}\frac{q^3 \epsilon_n}{\delta_n^3}, \quad \|E(r_n, \id)\|_{\sigma_n} < \epsilon_n,
	\] 
	\[
		C_3 q^3 \epsilon_n < \delta_n.
	\]
\item $r_n \to r_\infty$ in $\cA_{\sigma/2}$, with
	\[
		E(r_\infty, \id) = 0, \quad \|r_\infty - r_0\|_{\sigma/2, 2} < D_{10} q^3 \epsilon /\sigma^3. 
	\]  
\end{enumerate}
\end{prop}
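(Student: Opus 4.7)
The plan is a standard KAM induction invoking Proposition~\ref{prop:iter} at each step, with shrinking widths $\sigma_n = \sigma - \sum_{k<n}\delta_k$ (which stay above $\sigma/2$) and super-linear iterates $\epsilon_{n+1} = \epsilon_n^{5/4}$; the exponent $5/4$, rather than the classical $2$, leaves just enough room to absorb both the $q^3$ small-denominator factor from Corollary~\ref{cor:inverse-bound} and the $\delta_n^{-3}$ domain-restriction loss. The base case $n=1$ is immediate: item (1) follows from the two hypotheses $C_{10}\epsilon < q^{-8}$ and $C_{10}\epsilon < q^{-4}\sigma^4$, after enlarging $C_{10}$ by a factor $16$ to match $\delta_1^4 = (\sigma/4)^4$; item (2) reduces to the definition $\|E(r_1,\id)\|_{\sigma_1} = \epsilon$ and the inequality $C_3 q^3 \epsilon < \delta_1$, which follows from $\epsilon < q^{-4}\delta_1^4/C_{10}$ for $C_{10}$ large.

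For the inductive step, I would assume (1) and (2) at level $n$ and first verify the hypotheses of Proposition~\ref{prop:iter} applied with widths $(\sigma_n, \sigma_{n+1})$: the condition $\|r_n - r_0\|_{\sigma_n, 2} < (2C_3)^{-1}$ follows by telescoping the bounds on $\|r_{k+1}-r_k\|_{\sigma_{k+1}, 2}$ for $k<n$ (which, as noted below, form a geometric-like series dominated by their first term), and $C_4 q^3 \epsilon_n < \delta_n$ is the third estimate of (2). Applying the proposition yields $r_{n+1} = (r_n)_+$ with
\[
 \|r_{n+1} - r_n\|_{\sigma_{n+1}, 2} < D_4 \frac{q^3\epsilon_n}{\delta_n^3}, \qquad
 \|E(r_{n+1}, \id)\|_{\sigma_{n+1}} \leq \Bigl( \frac{D q^3}{\delta_n} + q^6 \Bigr) \epsilon_n^2.
\]
Closing the induction demands the second bound be at most $\epsilon_{n+1} = \epsilon_n^{5/4}$, equivalently $(Dq^3/\delta_n + q^6)\epsilon_n^{3/4} \leq 1$. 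The two parts of (1) give respectively $\epsilon_n^{3/4} \leq C_{10}^{-3/4} q^{-3}\delta_n^3$ and $\epsilon_n^{3/4} \leq C_{10}^{-3/4} q^{-6}$; substituting the first into the $Dq^3/\delta_n$ term and the second into the $q^6$ term yields a total $\leq (D\delta_n^2 + 1)C_{10}^{-3/4} \leq 1$ for $C_{10}$ sufficiently large (using $\delta_n \leq \sigma_0/4$). The next-level requirements $C_{10}\epsilon_{n+1} < q^{-8}$ and $C_{10}\epsilon_{n+1} < q^{-4}\delta_{n+1}^4$ follow by writing $\epsilon_{n+1} = \epsilon_n \cdot \epsilon_n^{1/4}$, since the factor $\epsilon_n^{1/4} < C_{10}^{-1/4} q^{-2}$ easily dominates the geometric loss $\delta_{n+1}^4/\delta_n^4 = 1/16$.

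For convergence, the ratio $\|r_{n+1}-r_n\|_{\sigma_{n+1}, 2}/\|r_n - r_{n-1}\|_{\sigma_n, 2}$ is bounded by $8\epsilon_n^{1/4}$, which is $\leq 1/2$ for $q$ large, so the series $\sum_n \|r_{n+1}-r_n\|_{\sigma_{n+1}, 2}$ converges in $\cA_{\sigma/2, 2}$ and is dominated by its first term $D_4 q^3\epsilon/(\sigma/4)^3$, giving $\|r_\infty - r_0\|_{\sigma/2, 2} \leq D_{10} q^3\epsilon/\sigma^3$. Passing to the limit in the uniform bound $\|E(r_n, \id)\|_{\sigma_n} \leq \epsilon_n \to 0$ and using continuity of $r \mapsto E(r, \id)$ then yields $E(r_\infty, \id) = 0$. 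The main obstacle I anticipate is the simultaneous bookkeeping of the two independent smallness conditions in (1), each needed to control a different contribution to the quadratic error: a single condition of the form $\epsilon_n < f(q,\delta_n)$ is not enough to absorb both the $q^6\epsilon_n^2$ term and the $q^3\epsilon_n^2/\delta_n$ term while maintaining the $5/4$-exponent scheme.
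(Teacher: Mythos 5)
Your proposal is correct and follows essentially the same route as the paper's proof: the same induction, the same splitting $\epsilon_n^2=\epsilon_n^{5/4}\cdot\epsilon_n^{3/4}$ with the two smallness conditions in item (1) absorbing the $q^6$ and $q^3\delta_n^{-1}$ contributions respectively, and the same geometric-series argument for convergence (your write-up is in fact somewhat more careful than the paper's, e.g.\ in checking the base case, the propagation of item (1), and the hypothesis $\|r_n-r_0\|_{\sigma_n,2}<(2C_3)^{-1}$). The only quibble is the harmless slip that $\delta_1^4=(\sigma/4)^4$ costs a factor $256$, not $16$, in enlarging $C_{10}$.
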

\begin{proof}
Since the sequence $\epsilon_n/(\delta_n)^3$ is decreasing if $\epsilon$ is small enough, item (1) is obvious. The only non-trivial estimate in (2) is the estimate of $\|E(r_n, \id)\|_{\sigma_n}$. Suppose item (2) hold up to index $n-1$.  We use Proposition~\ref{prop:iter} to get 
\[
	\|E(r_n, \id)\|_{\sigma_n} \le ( \epsilon_{n-1})^{\frac54} \cdot C_4 (\epsilon_{n-1})^{\frac34} \left( 2q^3 (\delta_{n-1})^{-3} + q^6  \right) 
\]
where the second group (after $\cdot$) in the product is smaller than $2 C_5^{-1}$ by item (1). Choosing $C_5$ large enough yields the desired estimate. Item (3) follows from item (2) by observing that $\sum_{n \ge 1}\epsilon_n/\sigma_n^3 < C \epsilon/\sigma^3$, and $\sigma_n \to \sigma/2$.  
\end{proof}

Theorem~\ref{thm:analytic-KAM} follows directly from what we just proved.

\begin{proof}[Proof of Theorem~\ref{thm:main}]
We now prove our main theorem.

\emph{Case 1, the analytic case}: Suppose $r$ satisfies \eqref{eq:real-assumption} and \eqref{eq:std-assumption}. For $q$ sufficiently large depending only on $D$ and $\sigma_0$, Corollary~\ref{cor:app-analytic} applies, and 
\[
	\|E(r \circ u_\app, \id)\|_{\sigma_0/4} < \epsilon: = C q^{-8} e^{-\sigma_0 q/8}. 
\]
Clearly the assumptions of Theorem~\ref{thm:analytic-KAM} applies, we obtain $r_\infty \in \cA_{\sigma_0/8, 2}$ such that $E(r_\infty, \id) = 0$, and $\|r_\infty - r \circ u_\app\|_{{\sigma_0}/8} < C q^{-5} e^{-\sigma_0 q/8}$. Then the boundary $r_\infty \circ (u_\app)^{-1}$ satisfies our conclusion. 

\emph{Case 2, the smooth case}. Assume $\|r\|_{C^m} \le C$.  Apply Corollary~\ref{cor:app-smooth}, we obtain $\|r_\omega - r\|_{C^l} \le C q^{-\frac{m-l}4}$, such that $\|E(r_\omega \circ u_\app, \id)\|_{\sigma_q/4} \le C e^{-C^{-1} q^{\frac78}}$. For large $q$ depending only on $D$ and $\sigma_0$, Theorem~\ref{thm:analytic-KAM} applies, and we obtain a perturbed boundary for which $E(\rho_\omega, \id) = 0$. Note that for $q$ large enough, $\|r_\infty \circ (u_\app)^{-1} - r_\omega\|$ is bounded by $q^{-\frac{m-l}7}$, since the former is exponentially small. Then $r_\infty \circ (u_\app)^{-1}$ is the boundary we seek. 
\end{proof}

\appendix

\section{Asymptotic expansion of the generating function}
\label{sec:gen-func}

In this section we compute the Taylor expansion of the generating function $L(s, s + h)$ in $h$, which will prove the analyticity of the billiard map, as well as helping with the Lazutkin coordinates. We assume $s$ is the arclength parameter, namely: $r: \R \to \R^2$ satisfy $|\dot{r}| = 1$. Then $r(s) = (\cos \alpha(s), \sin \alpha(s))$ for $\alpha \in [0, 2\pi]$, and $\dot{\alpha}(s) = \kappa(s)$ which is the curvature. For a fixed $s \in \R$, by rotating the axis if necessary, we may assume $\alpha(s) = 0$. Then explicitly:
\begin{equation}
  \label{eq:L-explicit}
  L(s, s + h)  = \left( \left( \int_0^h \cos(\alpha + \tau) d\tau \right)^2 + \left( \int_0^h \sin (\alpha + \tau) d\tau\right) \right)^{\frac12}. 
\end{equation}
Using the asymptotic expansions
\[
 \alpha(s + h) =    \kappa(s) h + \frac12 \dot\kappa(s) h^2 + O(h^3), 
\]
\[
\cos (\alpha(s + h)) = 1 - \frac12 \kappa^2 h^2 + O(h^3),\quad
\sin(\alpha(s + h)) = \kappa h + O(h^3), 
\]
we get 
\[
\begin{aligned}
 L(s, s +h) & = h\sqrt{ (1 - \frac16 \kappa^2 h + O(h^3))^2 + (\frac12 \kappa h + O(h^3))^2 } \\
 & = h \left(  1 - \frac{1}{24} \kappa^2 h^2 + O(h^3) \right). 
\end{aligned}
\]

Let us consider the billiard map $T(s, \vartheta) = (s^+, \vartheta^+)$, then 
\[
\cos \vartheta = - \partial_1 L(s, s^+) = - (1 - \frac18 \kappa^2 h^2 + O(h^3)), \quad \cos \vartheta^+ = \partial_2 L(s, s^+) = 1 - \frac{1}{8}\kappa^2 + O(h^3). 
\]
Denote $h = s^+ - s$, then $2 \sin^2 (\vartheta/2) = 1 - \cos \vartheta = \frac18 \kappa^2 h^2 + O(h^3)$, and therefore the equation 
\begin{equation}
  \label{eq:theta}
  2\sin(\vartheta/2) = \sqrt{2(1 + \partial_1 L(s, s + h))} = \frac12 \kappa h ( 1 + O(h))
\end{equation}
has a unique solution $h(s, \vartheta)$ near $\vartheta = 0$. Moreover, the implicit function theorem apply on the complex neighborhood. Therefore the function $s^+ = s + h(s, \vartheta)$ is analytic. Moreover, the function $\vartheta^+$ can be obtained from the relation $2 \sin (\vartheta^+/2) = \sqrt{2(1 - \partial_2 L(s, s + h))}$ which is also analytic. We note that $\partial^2_{12}L(s, s +h) = \frac14 \kappa^2 h + O(h^2)$, which is needed in the proof of Lemma~\ref{lem:analytic-norm}. 

We now perform a more detailed analysis of the expansion of $L(s, s +h)$ in $h$. Let's call the following expression a differential monomial in $\kappa$:
\[
	P(\kappa)(s)  = \prod_{k = 0}^m \left( \kappa^{(k)}(s) \right)^{a_k}, \quad a_k \in \N, \, k \ge 1,  \quad a_0 \in \R. 
\]
The degree of $P$ is $a_1 + \cdots a_k$. A differential polynomial of degree $k$ is sum of monomials of degree \emph{up to} $k$, and we denote the linear space of such expressions $\bP_k(\kappa)$. Denote by $\bQ(\kappa)[h]$ the power series of the type $\sum_{k = 0}^\infty P_k(\kappa)(s) h^k$, where each $P_k \in \bP_k(\kappa)$. We will also need notation for power series of the type $\sum_{k = 0}^\infty P_{k+i} h^k$, or $\sum_{k = 0}^\infty P_k h^{k+i}$, where $i \in \N$. Denote
by $\bQ_{\ge k}(\kappa)[h]$  the power series in $\bQ(\kappa)[h]$ that contains only terms higher than or equal to $h^k$. Then
\[
\sum_{k = 0}^\infty P_{k+i} h^k \in h^{-i}\bQ_{\ge i}(\kappa)[h], \quad
\sum_{k = 0}^\infty P_k h^{k+i} \in h^i \bQ(\kappa)[h]. 
\]

Observe that if $f(s, h) \in \bQ(\kappa)[k]$, then 
\[
\partial_s f(s, h), \partial_hf(s, h) = \sum_{k = 0}^\infty P_{k+1}(\kappa) h^k \in h^{-1} \bQ_{\ge 1}(\kappa)[h],
\]
and $f(s, h) \in h^{-k} \bQ_{\ge k}(\kappa)[h]$ implies $f(s, 0) \in \bP_k(\kappa)$. 

  Using \eqref{eq:L-explicit}, we get:
\[
L(s, s + h) = h\left( 1 - \frac{1}{24} \kappa^2 h^2 + h^2 \bQ_{\ge 1}(\kappa)[h] \right)
\]
where we abused notation by using $\bQ$ to denote a unspecified function in the same space. Similarly, \eqref{eq:theta} becomes
\[
2 \sin (\vartheta/2) = \frac12 \kappa h \left( 1 + \bQ_{\ge 1}(\kappa)[h] \right).
\]
Using the Lagrange inversion formula, the solution is given by $h(s,\vartheta) = \sum_{n \ge 1} g_n(s) z^n$, where $z = 2 \sin(\vartheta/2)$, and 
\[
 g_n(s) = \frac{d^{n-1}}{dh^{n-1}}\Bigr|_{h = 0} \left( \frac{1}{(\kappa/2)(1 + \bQ_{\ge 1}(\kappa)[h])} \right)^n \in \bP_{n-1}(\kappa). 
 \] 
Using $g_1 = 2\kappa^{-1}$ and $z$ as a power series $\vartheta + O(\vartheta^3)$,  we get 
\[
h(s, \vartheta) = 2\kappa^{-1} \vartheta + \vartheta \bQ_{\ge 1}(\rho)[\vartheta]. 
\]
 $\vartheta^+$  is obtained from $h$ by 
\[
2 \sin(\vartheta^+/2) = \frac12 \kappa h(1 + \bQ_{k \ge 1}(\kappa)[h]) = \vartheta + \vartheta \bQ_{\ge 1}(\kappa)[\vartheta].
\]
Finally, it's more convenient to use the radius of curvature $\rho = \kappa^{-1}$ to generate the polynomial, to avoid negative powers. Note that the change $\kappa = \rho^{-1}$ takes a function in $\bP_k(\kappa)$ to $\bP_{k}(\rho)$. We summarize the discussions so far in the following lemma. 
\begin{lem}\label{lem:map-expand}
The billiard map admits the following asymptotic expansion at $\vartheta = 0$:
\[
T(s, \vartheta) = \left( s + 2\rho \vartheta + \vartheta \bQ_{\ge 1}(\rho)[\vartheta], \,  \vartheta + \vartheta \bQ_{\ge 1}(\rho)[\vartheta] \right). 
\]
\end{lem}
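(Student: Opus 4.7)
The plan is to work throughout in the arclength parametrization and to track all coefficients in the graded algebra $\bQ(\kappa)[h]$ introduced just before the lemma. The argument splits into three stages: expand $L(s, s+h)$ as a Taylor series with coefficients in the differential polynomial spaces $\bP_k(\kappa)$; invert the relation between $z = 2\sin(\vartheta/2)$ and $h$ via Lagrange inversion to extract $h = h(s,\vartheta)$; and then read $\vartheta^+$ off from the symmetric identity involving $\partial_2 L$.

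For the first stage I would fix $s$ and rotate the axes so that the tangent angle satisfies $\alpha(s) = 0$. Writing $r(s+h) - r(s) = \int_0^h(\cos\alpha(s+\tau),\sin\alpha(s+\tau))\,d\tau$ with $\alpha(s+\tau) = \int_0^\tau \kappa(s+u)\,du$ and Taylor-expanding in $\tau$ shows that the $k$-th coefficient in $h$ of each component of $r(s+h) - r(s)$ is a differential polynomial in $\kappa$ of degree at most $k$, hence lies in $\bP_k(\kappa)$. Squaring, summing, and extracting the square root (analytic because the leading term is $h^2$) yields $L(s, s+h) = h(1 - \tfrac{1}{24}\kappa^2 h^2 + h^2\bQ_{\ge 1}(\kappa)[h])$, and differentiation gives $\partial_1 L(s, s+h) = -1 + \tfrac{1}{8}\kappa^2 h^2 + h^2\bQ_{\ge 1}(\kappa)[h]$. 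Combining with the half-angle identity $2\sin(\vartheta/2) = \sqrt{2(1+\partial_1 L)}$ then produces $2\sin(\vartheta/2) = \tfrac{1}{2}\kappa h(1 + h\bQ(\kappa)[h])$, a power series in $h$ with non-vanishing linear coefficient. The Lagrange inversion formula therefore applies and gives $h$ as a power series in $z = 2\sin(\vartheta/2)$ whose $n$-th coefficient is a polynomial in $\rho = \kappa^{-1}$ of degree at most $n-1$; since the leading coefficient is $2\rho$ and $z = \vartheta(1 + \bQ_{\ge 1}(\rho)[\vartheta])$, substitution yields $s^+ - s = h = 2\rho\vartheta + \vartheta\,\bQ_{\ge 1}(\rho)[\vartheta]$.

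For the second coordinate I would invoke the symmetric identity $2\sin(\vartheta^+/2) = \sqrt{2(1 - \partial_2 L(s, s+h))}$; the arclength expansion has the same structural form $\tfrac{1}{2}\kappa h(1 + h\bQ(\kappa)[h])$. Substituting the expression for $h$ obtained above yields $2\sin(\vartheta^+/2) = \vartheta(1 + \bQ_{\ge 1}(\rho)[\vartheta])$, and inverting the analytic map $z \mapsto 2\sin(z/2)$ preserves this shape, so $\vartheta^+ = \vartheta + \vartheta\,\bQ_{\ge 1}(\rho)[\vartheta]$. The main bookkeeping obstacle is verifying that every operation---multiplication, extraction of the square root with non-vanishing constant term, Lagrange inversion, and the substitution $\kappa \leftrightarrow \rho$---respects the degree grading on $\bP_k$, so that the differential polynomials appearing at each step have the claimed degrees; once that algebraic closure is established, the two coordinate formulas assemble directly into the lemma.
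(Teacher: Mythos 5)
Your proposal follows essentially the same route as the paper's own argument in Appendix~\ref{sec:gen-func}: expand $L(s,s+h)$ in the rotated arclength frame with coefficients tracked in $\bQ(\kappa)[h]$, use the half-angle relations $2\sin(\vartheta/2)=\sqrt{2(1+\partial_1 L)}$ and $2\sin(\vartheta^+/2)=\sqrt{2(1-\partial_2 L)}$, apply Lagrange inversion to get $h(s,\vartheta)$ with coefficients in $\bP_{n-1}$, and pass from $\kappa$ to $\rho$ at the end. The bookkeeping you flag (closure of the grading under multiplication, square roots with unit leading term, inversion, and $\kappa\leftrightarrow\rho$) is exactly what the paper verifies, so the proof is correct and not materially different.
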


\section{Higher order Lazutkin normal form}

In this section we build on the asymptotic expansion of last section and obtain higher order normal forms of the type
\[
	(x, y) \mapsto (x + y + O(y^m), \quad y + O(y^{m+1})). 
\] 
 These normal forms are already known to Lazutkin (\cite{Laz1973}) in the smooth case, and an analytic version is given in \cite{MRT2016}. We provide a version with explicit estimates on the width of analyticity, inspired by the unpublished notes of J. De Simoi and A. Sorrentino (\cite{DeSimoi}, \cite{Sorrentino}). 

Suppose the billiard map  $T(s, \vartheta) = (s^+, \vartheta^+)$  and its inverse $T^{-1}(s, \vartheta) =(s^-, \vartheta^-)$ admit the expansion 
\[
	s^\pm  = s  + \sum_{k \ge 1} b_k^\pm \vartheta^k, \quad \vartheta^\pm = \vartheta + \sum_{k \ge 2} d_k^\pm \vartheta^k.
\]
Due to the time reversibility of the map, namely if $I(s, \vartheta) = (s, -\vartheta)$, then $I \circ T \circ I = T$, there exists functions $b_k(s), d_k(s)$ such that $b_{2i +1}^\pm = \pm b_{2i + 1}$, $b_{2i}^\pm = b_{2i}$ and $d_{2i}^\pm = \pm d_{2i}$, $d_{2i+1}^\pm = d_{2i}$. Moreover, Lemma~\ref{lem:map-expand} imply $b_k(s), d_k(s) \in \bP_{k-1}(\rho)$. 
For example, 
\[
	b_1 = 2\rho, \quad b_2 = \frac43 \rho \dot{\rho}, \quad b_3 = \frac49 \rho \dot{\rho}^2 + \frac23 \rho^2 \ddot{\rho}, \quad d_2 = -\frac23 \dot{\rho}, \quad
	\text{etc.}
\]

To obtain a normal form, we consider the formal coordinate change 
\begin{equation}
  \label{eq:higher-lazutkin}
  X(s, \vartheta) = \sum_{i = 0}^\infty F_{2i}(s) \vartheta^{2i}, \quad Y(s, \vartheta) = X(s, \vartheta) - X(s^-(s, \vartheta), \vartheta^-(s, \vartheta)). 
\end{equation}
Note that 
\begin{equation}
  \label{eq:Phi-y}
  Y(s, \vartheta) = F(s) - F(s^-) + O(\vartheta^2) = F'(s)\vartheta + O(\vartheta^2). 
\end{equation}
We attempt to solve the formal equation $Y^+ - Y = 0$, namely
\begin{equation}
  \label{eq:homological}
  X(s^+, \vartheta^+) - 2 X(s, \vartheta) + X(s^-, \vartheta^-) = 0. 
\end{equation}
The main result of this section is:
\begin{prop}\label{prop:formal-conj}
There exists a sequence of analytic functions $F_{2i}$, $i \ge 0$, given by the integral formula
\[
\rho^{-\frac{2i}3} F_{2i} = C \int_0^s \rho^{-\frac23}(\tau) d\tau +  \int_0^s \rho^{-\frac23}(\tau)\int_0^\tau \rho^{-\frac43}(\sigma) P(\sigma) d\sigma d\tau, 
\]
where $P \in \bP_{2i + 1}(\rho)(s)$, and  $C$ is determined by periodicity. Then the coordinate change \eqref{eq:higher-lazutkin} using $F_{2i}$ formally conjugate the map $T$ to $(X, Y) \mapsto (X + Y, Y)$ as a power series of $Y$. 
\end{prop}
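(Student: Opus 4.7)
The plan is to proceed by induction on $i \ge 0$, matching coefficients of $\vartheta^{2i+2}$ on both sides of the homological equation \eqref{eq:homological}. Using the expansions $s^\pm = s + \sum b_k^\pm \vartheta^k$ and $\vartheta^\pm = \vartheta + \sum d_k^\pm \vartheta^k$ from Lemma~\ref{lem:map-expand}, together with Taylor expansion of each $F_{2k}$ around $s$, I would first record that the time-reversibility identities $b_{2j+1}^\pm = \pm b_{2j+1}$, $b_{2j}^\pm = b_{2j}$ (and the analogous ones for the $d_k^\pm$) make $X(s^+,\vartheta^+) + X(s^-,\vartheta^-)$ even in $\vartheta$. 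Consequently \eqref{eq:homological} gives nontrivial constraints only at orders $\vartheta^{2i+2}$, and at each such level the contributions from $F_{2j}$ with $j > i$ cancel against the $2X(s,\vartheta)$ term.

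At order $\vartheta^{2i+2}$, the resulting equation is a linear second-order ODE for $F_{2i}$ whose inhomogeneity $R_i$ depends only on $F_0,\dots,F_{2i-2}$. A direct computation using $b_1 = 2\rho$, $b_2 = \tfrac{4}{3}\rho\dot\rho$, $d_2 = -\tfrac{2}{3}\dot\rho$ gives
\[
4\rho^2\,F_{2i}'' + \tfrac{8}{3}(1-2i)\rho\dot\rho\,F_{2i}' + \gamma_i(s)\,F_{2i} = R_i(s),
\]
where $\gamma_i \in \bP_2(\rho)$ packages the contributions from $d_3$ and $d_2^2$. The substitution $\tilde F_{2i} = \rho^{-2i/3}F_{2i}$ is chosen precisely so that the shift $(2\dot\rho/3\rho)F_{2i}$ combines with the $\gamma_i F_{2i}$ term to annihilate the undifferentiated contribution, leaving the Sturm--Liouville form
\[
\frac{d}{ds}\!\left(\rho^{2/3}\,\tilde F_{2i}'\right) = \rho^{-4/3}\,P(s).
\]
The membership $P \in \bP_{2i+1}(\rho)$ follows by an inductive degree count: $R_i$ is built by applying finitely many derivatives to $F_{2j}$ (so $\tilde F_{2j} \in \bP_{2j}(\rho)$ by hypothesis) and multiplying by $b_k, d_k \in \bP_{k-1}(\rho)$.

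Integrating the Sturm--Liouville equation once gives $\rho^{2/3}\tilde F_{2i}' = C + \int_0^s \rho^{-4/3}P\,d\sigma$, and a second integration yields exactly the integral formula stated in the proposition. For $i = 0$ one has $R_0 = 0$, so $P \equiv 0$ and $C$ is the usual Lazutkin normalization. For $i \ge 1$ one demands periodicity of $F_{2i}$ on $\T$: since $\rho$ is periodic this is equivalent to $\tilde F_{2i}(1) = \tilde F_{2i}(0)$, and given that $\Phi(\tau) = \int_0^\tau \rho^{-4/3}P\,d\sigma$ is itself periodic, the constant $C$ is uniquely determined by $C\int_0^1\rho^{-2/3}\,ds + \int_0^1 \rho^{-2/3}\Phi\,ds = 0$.

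The main obstacle I anticipate is verifying the mean-zero solvability condition $\int_0^1 \rho^{-4/3}P\,ds = 0$, which is what makes $\Phi$ periodic in the first place. I expect this to follow from the time-reversible and exact symplectic nature of $T$: the linear operator $\mathcal L X = X\circ T - 2X + X\circ T^{-1}$ is formally self-adjoint with respect to the natural invariant (Lebesgue) measure in the $(s,\vartheta)$ coordinates near the boundary, so the Fredholm alternative, applied at each inductive level, forces orthogonality of $R_i$ to the constant function in the appropriate weighted space. Equivalently, one can track the algebraic recursion defining $R_i$ and use the involution $I(s,\vartheta) = (s,-\vartheta)$ (which conjugates $T$ to $T^{-1}$) to pair the terms in $R_i$ so that they manifestly land in the range of $\frac{d}{ds}\!\circ(\rho^{-4/3}\cdot)$.
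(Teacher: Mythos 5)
Your route is essentially the paper's: you match coefficients order by order in the even powers of $\vartheta$, arrive at the same second-order ODE (same coefficients, built from $b_1=2\rho$, $b_2=\tfrac43\rho\dot\rho$, $d_2=-\tfrac23\dot\rho$, $d_3$), make the same substitution $\rho^{-2i/3}F_{2i}$ to reach an exactly integrable equation, integrate twice to get the stated formula, and fix the constant by periodicity. Two points, however, are genuine gaps in your write-up. First, your degree count rests on the claim that $\tilde F_{2j}=\rho^{-2j/3}F_{2j}\in\bP_{2j}(\rho)$ ``by hypothesis''; this is false, since $F_{2j}$ is produced by two antiderivatives of differential polynomials and is not itself a differential polynomial (nor are its low-order derivatives). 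The paper's actual mechanism is an increment calculus: if $P\in\bP_m(\rho)$ then $P(s+h)-P(s)\in h^{-m}\bQ_{\ge m+1}$, and from the double-integral formula $F_{2k+2}(s+h)-F_{2k+2}(s)\in h^{-2k-1}\bQ_{\ge 2k+2}$. This is what both identifies the class of the new inhomogeneity and, crucially, sustains the inductive hypothesis \eqref{eq:inductive} that the remainder left by $F_0,\dots,F_{2k}$ lies in $\vartheta\bQ_{\ge 2k+3}[\vartheta]$, so that the next equation sits exactly at order $\vartheta^{2k+4}$; you assert this bookkeeping but do not establish it, and your stated justification would not survive scrutiny as written.

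Second, the solvability condition you single out --- that $\int_0^1\rho^{-4/3}P\,ds=0$, without which no choice of the single constant $C$ makes $F_{2i}$ ($i\ge1$) periodic --- is indeed necessary and is left unproven in your proposal. Your two suggested remedies are only heuristics, and the first is dubious as stated: the natural invariant measure for the billiard map is $\sin\vartheta\,ds\,d\vartheta$, not Lebesgue measure in $(s,\vartheta)$, and formal self-adjointness of $X\mapsto X\circ T-2X+X\circ T^{-1}$ does not obviously deliver the weighted orthogonality at each fixed order in $\vartheta$. In fairness, the paper's own proof does not verify this vanishing either (it simply declares the constant ``uniquely determined by periodicity''), so on this point you are no less complete than the paper and more candid about the difficulty; but as a standalone argument your proposal leaves this step, by your own admission, open, whereas everything else you do coincides with the paper's proof.
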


\begin{proof}
It suffices to solve the formal equation \eqref{eq:homological} up to all orders of $\vartheta$, in view of \eqref{eq:Phi-y}. 

Due to the time reversal symmetry, $F_{2i}(s^+) - 2F_{2i}(s) + F_{2i}(s^-)$ is an even series and therefore vanish of order $2$ at $\vartheta = 0$. This means the leading term of \eqref{eq:homological} is at order $\vartheta^2$ determined only by $F_0$. An explicit computation shows:
\[
\begin{aligned}
 & F_0(s^+) - 2F_0(s) + F_0(s^+)\\
 &   = F'(s^+ + s^- - 2s) + \frac12 F''_0(s) \left( (s^+ - s)^2 - (s^- - s)^2 \right) + O(\vartheta^4) \\
 &   = (2b_2^2)F_0'(s) \vartheta^2 + \frac12 F''_0(s) (2b_1) \vartheta^2 + O(\vartheta^4). 
\end{aligned}
\]

Attempting to eliminate the $\vartheta^2$ term leads to the equation 
\[
	2b_2F'_0(s) + b_1^2 F''_0(s) = 0, \quad \frac43\left(  2 \rho \dot{\rho} F'_0 + 3\rho^2 F''_0 \right) =0, 
\]
whose solution is
\begin{equation}
  \label{eq:F-laz}
  F_0(s) = C \int_0^s \rho^{-\frac23}(z)dz, 
\end{equation}
where $C$ can be chosen to preserve periodicity. This is identical to Lazutkin's choice. 

Let us omit the notation $(\rho)$ from $\bQ(\rho)[h]$. Note that the function $F_0(s + h) - F_0(h) = C \rho^{-\frac23}(s) h + O(h^2)$ is contained in the space $h\bQ[h]$, due to our particular choice of $F_0$. Noting $s^+ - s, s^- - s \in \vartheta \bQ[\vartheta]$, by substituting we obtain $F_0(s^+) - 2F_0(s) + F_0(s^-) \in \vartheta \bQ[\vartheta]$. Moreover, we already eliminated the terms lower than $\vartheta^4$, which means
\[
F_0(s^+) - 2F_0(s) + F_0(s^-) \in \vartheta \bQ_{\ge 3}[\vartheta].
\]

We now proceed by induction. Suppose for some $k \ge 1$, 
\begin{equation}
  \label{eq:inductive}
  R_{2k + 4} : = \sum_{i = 0}^{k} \left( F_{2i}(s^+)(\vartheta^+)^{2i} - 2F_{2i}(s)\vartheta^{2i} + F_{2i}(s^-) (\vartheta^-)^{2i} \right) \in \vartheta \bQ_{\ge 2k+3}[\vartheta], 
\end{equation}
we try to solve 
\[
	 F_{2k+2}(s^+)(\vartheta^+)^{2k+2} - 2F_{2k+2}(s)\vartheta^{2k+2} + F_{2k+2}(s^-) (\vartheta^-)^{2k+2}  + R_{2k + 4} = O(\vartheta^{2k+6}). 
\]

We split
\begin{align}
	&F_{2k+2}(s^+)(\vartheta^+)^{2k +2} -2 F_{2k+2}(s)\vartheta^{2k+2} + F_{2k+2}(s^-)(\vartheta^-)^{2k+2} \nonumber\\
	& = \left( F_{2k+2}(s^+) -2 F_{2k+2}(s) + F_{2k+2}(s^-) \right) \vartheta^{2k+2} \label{eq:F-diff} \\
	&\quad + F_{2k+2}(s)\left( (\vartheta^+)^{2k+2} - 2\vartheta^{2k+2} + (\vartheta^-)^{2k+2}\right) \label{eq:z-diff}  \\
	&\quad + \left( F_{2k+2}(s^+) - F_{2k+2}(s) \right)\left( (\vartheta^+)^{2k+2} - \vartheta^{2k+2} \label{eq:z-diff2} \right)  \\
	& \quad + \left( F_{2k+2}(s^-) - F_{2k+2}(s) \right)\left( (\vartheta^-)^{2k+2} - \vartheta^{2k+2} \right). \nonumber 
\end{align}
The line \eqref{eq:F-diff} is equal to 
\[
\left( (2b_2)F_{2k+2}'(s)  + b_1^2 F_{2k+2}''(s) \right) \vartheta^{2k+2} + O(\vartheta^{2k + 6}),
\]
by the same computations as the $k = 0$ case. The line \eqref{eq:z-diff} is 
\[
	\begin{aligned}
		& (\vartheta^+)^{2k+2} - 2\vartheta^{2k+2} + (\vartheta^-)^{2k+2} \\
		& = \left( \vartheta + d_2 \vartheta^2 + d_3 \vartheta^3 + O(\vartheta^4) \right)^{2k+2} 
		+ \left( \vartheta - d_2 \vartheta^2 + d_3 \vartheta^3 + O(\vartheta^4) \right)^{2k+2} - 2 \vartheta^{2k+2} \\
		& = 2(2k+2)d_3 \vartheta^{2k+4} + (2k+2)(2k+1) d_2^2 \vartheta^{2k+4} + O(\vartheta^{2k + 6}). 
	\end{aligned}
\]
The two lines in \eqref{eq:z-diff2} is equal to 
\[
\begin{aligned}
& \left( F_{2k+2}'(s) (b_1 \vartheta + O(\vartheta^2)) \right)\left( (\vartheta + d_2 \vartheta^2 + O(\vartheta^3))^{2k+2} - \vartheta^{2k+2} \right) \\
&\quad+ \left( F_{2k+2}'(s) (- b_1 \vartheta + O(\vartheta^2)) \right)\left( (\vartheta - d_2 \vartheta^2+ O(\vartheta^3))^{2k+2} - \vartheta^{2k+2} \right) \\
& = 2(2k+2)b_1d_2 F_{2k+2}'(s) + O(\vartheta^{2k + 6}), 
\end{aligned}
\]
with $R^{(2)}_{2k + 6} = O(\vartheta^{2k + 6})$. 
Combining all of the above, we get 
\begin{equation}
  \label{eq:F2k2-remainder}
  \begin{aligned}
& F_{2k+2}(s^+)(\vartheta^+)^{2k +2} -2 F_{2k+2}(s)\vartheta^{2k+2} + F_{2k+2}(s^-)(\vartheta^-)^{2k+2} \\
& = \Bigl( b_1^2 F_{2k+2}'' + (2b_2 + 2(2k+2)b_1d_2)F_{2k+2}'  \\
& \quad \quad + (2(2k+2)d_3 + (2k+2)(2k+1)d_2^2) F_{2k+2} \Bigr) \vartheta^{2k+4} + O(\vartheta^{2k + 6}). 
\end{aligned}
\end{equation}
Therefore we can choose $F_{2k+2}$ to be the solution of 
\[
 	b_1^2 G'' + (2b_2 + 2(2k+2)b_1d_2)G'  + (2(2k+2)d_3 + (2k+2)(2k+1)d_2^2) G = - P_{2k+4},  
\]
where $P_{2k + 4} \in \bP_{2k + 3}(\rho)$ is the the coefficient to the $\vartheta^{2k + 4}$ term in $R_{2k + 4}$. Explicitly, the equation reads (denote $m = 2k+2$):
\[
4\rho^2 G'' - (m-1)\frac{8}{3} \dot{\rho}\rho G' + \left(m(m+1) \frac49 \dot{\rho}^2 - \frac43 m \ddot{\rho}\rho \right) G = - P_{m+2}. 
\]
The substitution $G = g \rho^{\frac{m}{3}}$ converts the equation to 
\[
3 \rho^2 g'' + 2 \dot{\rho} \rho g' = - \frac34 \rho^{-\frac{m-3}{3}}(P_{m+2} + 1) =: \hat{P}_{m+2}, 
\]
whose solution can be explicitly given by 
\[
g(s) = C_1\int_0^s \rho^{-\frac23}(\tau) d\tau +  \int_0^s \rho^{-\frac23}(\tau)\int_0^\tau \rho^{-\frac43}(\sigma) \hat{P}_{m+2}(\sigma) d\sigma d\tau, 
\]
and $G(s) = g \rho^{\frac{m}3}$. Here the constant $C_1$ is uniquely determined by periodicity, and note $\hat{P}_{m + 2} = \hat{P}_{2k + 4} \in \bP_{2k + 3}(\rho)$. 

We now note that if $P(s) \in \bP_{m}(\rho)$, then $P(s + h)- P(s) \in h^{-m}\bQ_{\ge m+1}$. Using this and the explicit formula defining $G(s) = F_{2k + 2}(s)$, (noting that we integrate twice) we get $G(s + h) - G(s) \in h^{-2k -1} \bQ_{\ge 2k + 2}$. Equipped with this fact, we revisit the remainders in the previous calculations. Since
\begin{multline*}
F_{2k+2}(s^+)(\vartheta^+)^{2k +2} -2 F_{2k+2}(s)\vartheta^{2k+2} + F_{2k+2}(s^-)(\vartheta^-)^{2k+2}  + P_{2k + 4} \\
  \in \vartheta^{2k + 2} \cdot \vartheta^{-2k-1} \bQ_{\ge 2k +2} = \vartheta \bQ_{\ge 2k + 2}, 
\end{multline*}
and also is of $O(\vartheta^{2k + 6})$, it must be contained in $\vartheta \bQ_{\ge 2k + 5}$, which is our inductive hypothesis \eqref{eq:inductive}. 
\end{proof}

\begin{proof}
[Proof of Proposition~\ref{prop:gen-laz}]
We truncate the coordinate change \eqref{eq:higher-lazutkin} up to order $2k$, and denote the coordinate change by $\Phi$. For  $A(x, y) = (x + y, y)$, then 
\[
\begin{aligned}
  & 	(\Phi \circ T - A \circ \Phi) (s, \vartheta)  = (Y(s^+, \vartheta^+) - Y(s, \vartheta), Y(s^+, \vartheta^+) - Y(s, \vartheta))  \\
  & \quad = (R(s, \vartheta) \vartheta^{2k+4}, R(s, \vartheta) \vartheta^{2k+4})
\end{aligned}
\]
where $R$ is an anlytic functions in $(s, \vartheta)$. The norm of $R$ can be estimated by the $(2k +4)$th derivatives of $\Phi \circ T - A \circ \Phi$. 

By Proposition~\ref{prop:formal-conj}, the $2k+4$th derivative of $R = X(s^+, \vartheta^+) - 2 X(s, \vartheta) + X(s^-, \vartheta^-)$ depends on up to $2k+4$ derivatives on $F_{2i}\vartheta^{2i}$. Since $F_{2i}''$ depends on $2i+1$ derivatives of $\rho$,  the norm of $R$ is estimated by up to $2k +3$ derivatives of $\rho$. We conclude using the Cauchy estimate that 
\[
\|R\|_{\sigma} \le C(k) (\sigma_0/2 - \sigma)^{-2k-3}\|\rho\|_{\sigma_0/2}. 
\]
Here the constant $C(k)$ comes from the actual differential polynomial in the coordinate change, and can be made explicit for every finite $k$. 
 By \eqref{eq:Phi-y}, the same estimate, with possibly a different constant, holds for $\Phi \circ T \circ \Phi^{-1} - A$. 
\end{proof}

\bibliographystyle{plain}
\bibliography{billiard}

\end{document}